\def\@cite#1#2{{\m@th\upshape\bfseries%
[{#1\if@tempswa{\m@th\upshape\mdseries, #2}\fi}]}} \makeatother
\theoremstyle{plain}
\newtheorem{thm}[subsection]{Theorem}
\newtheorem{cor}[subsection]{Corollary}
\newtheorem{lem}[subsection]{Lemma}
\theoremstyle{definition}
\newtheorem{example}[subsection]{Example}
\newtheorem{defn}[subsection]{Definition}
\newtheorem{eg}[subsection]{Example}
\newcommand{\bC}{{\mathbb{C}}}
\newcommand{\bF}{{\mathbb{F}}}
\newcommand{\bT}{{\mathbb{T}}}
\newcommand{\bZ}{{\mathbb{Z}}}
  \newcommand{\A}{{\mathcal{A}}}
  \newcommand{\B}{{\mathcal{B}}}
  \newcommand{\D}{{\mathcal{D}}}
  \newcommand{\E}{{\mathcal{E}}}
  \newcommand{\G}{{\mathcal{G}}}
\renewcommand{\H}{{\mathcal{H}}}
  \newcommand{\I}{{\mathcal{I}}}
  \newcommand{\J}{{\mathcal{J}}}
  \newcommand{\K}{{\mathcal{K}}}
\renewcommand{\L}{{\mathcal{L}}}
\newcommand{\M}{{\mathcal{M}}}
\renewcommand{\O}{{\mathcal{O}}}
  \newcommand{\T}{{\mathcal{T}}}
  \newcommand{\U}{{\mathcal{U}}}
  \newcommand{\V}{{\mathcal{V}}}
  \newcommand{\W}{{\mathcal{W}}}
  \newcommand{\X}{{\mathcal{X}}}
\newcommand{\fF}{{\mathfrak{F}}}
\newcommand{\fK}{{\mathfrak{K}}}
\newcommand{\rC}{{\mathrm{C}}}
\renewcommand{\phi}{\varphi}
\newcommand{\upchi}{{\raise.35ex\hbox{\ensuremath{\chi}}}}
\newcommand{\qand}{\quad\text{and}\quad}
\newcommand{\qfor}{\quad\text{for}\quad}
\newcommand{\qforal}{\quad\text{for all}\quad}
\newcommand{\AND}{\text{ and }}
\newcommand{\Ad}{\operatorname{Ad}}
\newcommand{\Aut}{\operatorname{Aut}}
\newcommand{\Dim}{\operatorname{dim}}
\newcommand{\ran}{\operatorname{Ran}}
\newcommand{\spn}{\operatorname{span}}
\newcommand{\Ath}{\A_\theta}
\newcommand{\ca}{\mathrm{C}^*}
\newcommand{\cenv}{\mathrm{C}^*_{\text{env}}}
\newcommand{\dlim}{\displaystyle\lim\limits}
\newcommand{\Fn}{\mathbb{F}_n^+}
\newcommand{\Fm}{\mathbb{F}_m^+}
\newcommand{\Fmn}{\mathbb{F}_{mn}^+}
\newcommand{\Fth}{\mathbb{F}_\theta^+}
\newcommand{\Ftheta}{\mathbb{F}_\theta^+}
\newcommand{\Fu}{\mathbb{F}_u^+}
\newcommand{\Fockm}{\ell^2(\Fm)}
\newcommand{\Fockth}{\ell^2(\Fth)}
\newcommand{\lip}{\langle}
\newcommand{\rip}{\rangle}
\newcommand{\ip}[1]{\langle #1 \rangle}
\newcommand{\mt}{\varnothing}
\newcommand{\norm}[1]{\left\| #1 \right\|}
\newcommand{\ol}{\overline}
\newcommand{\ltwo}{\ell^2}
\begin{document}
%%%%%%%%%%%%%%%%%%%%%%%%%%%%%%%%%%%%%%%%%%%
\title[Dilation Theory for Rank 2 Graph Algebras]{Dilation Theory\\for Rank 2 Graph Algebras}
%\thanks{}

\author[K.R.Davidson]{Kenneth R. Davidson}
\address{Pure Math.\ Dept.\\U. Waterloo\\
Waterloo, ON\; N2L--3G1\\CANADA}
\email{krdavids@uwaterloo.ca}
%\thanks{First author partially supported by an NSERC grant.}

\author[S.C.Power]{Stephen C. Power}
%\thanks{Second author partially supported by an SERC grant.}
\address{Dept.\ Math.\ Stats.\\ Lancaster University\\
Lancaster LA1 4YF \\U.K. }
\email{s.power@lancaster.ac.uk}

\author[D.Yang]{Dilian Yang}
\address{Pure Math.\ Dept.\\U. Waterloo\\
Waterloo, ON\; N2L--3G1\\CANADA}
\email{dyang@uwaterloo.ca}

\begin{abstract}
An analysis  is given of $*$-representations of  rank 2 single
vertex graphs. We develop dilation theory for the non-selfadjoint
algebras $\A_\theta$ and $\A_u$ which are associated with the
commutation relation permutation $\theta$ of a 2 graph and,  more
generally, with commutation relations determined by a unitary
matrix $u$ in $M_m(\bC) \otimes M_n(\bC)$. We show that a defect
free row contractive representation has a unique minimal dilation
to a $*$-representation and we provide a new simpler proof of
Solel's row isometric dilation of two $u$-commuting row
contractions. Furthermore it is shown that the C*-envelope of
%the non-selfadjoint algebra $\Ath$
%is the Kumjian--Pask C*-algebra $\ca(\Fth)$ and that the
%C*-envelope of
$\A_u$ is the generalised Cuntz algebra $\O_{X_u}$ for the product
system $X_u$ of $u$; that for $m\geq 2 $ and $n \geq 2 $
contractive representations of $\Ath$ need not be completely
contractive; and that the universal tensor algebra $\T_+(X_u)$
need not be isometrically isomorphic to $\A_u$.
\end{abstract}

\thanks{2000 {\it  Mathematics Subject Classification.}
47L55, 47L30, 47L75, 46L05.}
\thanks{{\it Key words and phrases:}
higher rank graph, atomic $*$-representation, dilation, C*-envelope}
\thanks{First author partially supported by an NSERC grant.}
\thanks{Second author partially supported by EPSRC grant EP/E002625/1.}

\date{}
\maketitle
%%%%%%%%%%%%%%%%%%%%%%%%%%%%%%%%%%%%%%%%%%%

%%%%%%%%%%%%%%%%%%%%%%%%%%%%%%%%%%%%%%%%%%
\section{Introduction}\label{S:intro}
%%%%%%%%%%%%%%%%%%%%%%%%%%%%%%%%%%%%%%%%%%%

Kumjian and Pask \cite{KumPask} have introduced a family of
C*-algebras associated with
higher rank graphs. In \cite{KP1}, Kribs and Power examined the
corresponding non-selfadjoint operator algebras
and recently Power \cite{P1} has presented a detailed analysis of
the single vertex case, with particular emphasis on rank 2 graphs.
Already this case contains many new and intriguing algebras. In
this paper, we continue this investigation by beginning a study of
the representation and dilation theory of these algebras as well
as more general algebras determined by unitary commutation
relations.

In the $2$-graph case  the C*-algebras are the universal
C*-algebras of unital discrete semigroups which are given
concretely in terms of a finite set of generators and relations of
a special type. Given a permutation $\theta$ of $m \times n$, form
a unital semigroup $\Fth$ with generators
$e_1,\dots,e_m,f_1,\dots,f_n$ which is free in the $e_i$'s and
free in the $f_j$'s, and has the commutation relations $e_i f_j =
f_{j'} e_{i'}$ where $\theta(i,j) = (i',j')$ for $1 \le i \le m$
and $1 \le j \le n$. This is a cancellative  semigroup with unique
factorization \cite{KumPask, P1}.

Consider the left regular representation $\lambda$ of these
relations on $\Fockth$ given by  $\lambda(w) \xi_x = \xi_{wx}$.
The norm closed unital operator algebra generated by these
operators is denoted by $\A_\theta$. In line with Arveson's
approach pioneered in \cite{Arv1}, we are interested in
understanding the completely contractive representations of this
algebra. The message of two recent papers on the Shilov boundary
of a unital operator algebra, Dritschel and McCullough \cite{DMc}
and Arveson \cite{Arv_choq}, is that a representation should be
dilated to a maximal dilation; and these maximal dilations extend
uniquely to $*$-representations of the generated C*-algebra that
factor through the C*-envelope. Thus a complete description of
maximal dilations will lead to the determination of the
C*-envelope.

Kumjian and Pask define a $*$-representation of the semigroup
$\Fth$ to be a representation $\pi$  of $\Fth$ as isometries with
the following property which we call the \textit{defect free}
property:
\[
 \sum_{i=1}^m \pi(e_i)\pi(e_i)^* = I
 = \sum_{j=1}^n \pi(f_j) \pi(f_j)^* .
\]
The universal C*-algebra determined by this family of
representations is denoted $\ca(\Fth)$. We shall show that every
completely contractive representation of $\Ath$ dilates to a
$*$-representation. This allows us in particular to deduce that
the C*-envelope of $\Ath$ is $\ca(\Fth)$. This identification is
due to Katsoulis and Kribs \cite{KK} who show, more generally, that
the universal C*-algebra of a higher rank graph $(\Lambda , d)$ is
the enveloping C*-algebra of the associated left regular
representation algebra $\A_\Lambda$.

The left regular representation of $\Fth$ is not a
$*$-representation. It is important though that it dilates (in
many ways) to a $*$-representation.

A significant class of representations which play a key role in
our analysis are the \textit{atomic} $*$-representations. These
row isometric representations have an orthonormal basis which is
permuted, up to unimodular scalars, by each of the generators.
They have a rather interesting structure, and in a sequel to this
paper \cite{DPYatomic}, we shall completely classify them in terms
of families of explicit partially isometric representations. In
this paper, we see the precursors of that analysis. The dilation
theory for partial isometry representations that we develop will
be crucial to our later analysis.

These atomic representations also allow us to
describe the C*-algebra $\ca(\Fth)$.
Such a description relies on an understanding of the
Kumjian--Pask aperiodicity condition.
The periodic case is characterized in \cite{DYperiod},
leading to the structure of  $\ca(\Fth)$.

An important tool for us will be Solel's generalisation of Ando's
dilation theorem to the case of a pair of row contractions $[A_1
\dots A_m],$  $[B_1 \dots B_n]$ that satisfy the commutation
relations
\[
A_iB_j = \sum_{i'=1}^m\sum_{j'=1}^n u_{(i,j),(i',j')}B_{j'}A_{i'}
\]
where $u = u_{(i,j),(i',j')}$ is a unitary matrix in
$M_{mn}(\bC)$. Solel obtained this result as part of his analysis
of the representation theory for the tensor algebra $\T_+(X)$
associated with a product system  of correspondences $X$. We
obtain a new simple proof  which is based on the
Frahzo-Bunce-Popescu dilation theory of row contractions and the
uniqueness of minimal dilations.

The relevant tensor algebra, as defined in \cite{SolCP}, arises as
a universal algebra associated with a product system of
correspondences,
\[
X_u = \{E_{k,l}=(\bC^m)^{\otimes k}\otimes (\bC^n)^{\otimes l}:k,l
\in \bZ_+\}, \]
 where the composition maps
\[
E_{k,l}\otimes E_{r,s} \to E_{k+r,l+s}
\]
are unitary equivalences determined naturally by $u$. An
equivalent formulation which fits well with our perspectives is to
view $\T_+(X_u)$ as the universal operator algebra for a certain
class of representations (row contractive ones) of the norm closed
operator algebra $\A_u$ generated by creation operators $\lambda
({e_i}), \lambda ({f_j})$ on the Fock space of $X_u$. These
unitary relation algebras generalise the 2-graph algebras
$\A_\theta$. While the atomic representation theory of these
algebras remains to be exposed we can analyse C*-envelopes,
C*-algebra structure and dilation theory in this wider generality
and so we do so. Also we prove, as one of the main results,  that
a defect free row contractive representation of $\A_u$ has a
unique minimal row isometric defect free representation.

Prior to Solel's study \cite{SolCP}, the operator algebra theory
of product systems centered on C*-algebra considerations. In
particular Fowler \cite{Fow}, \cite{Fow2} has  defined and
analyzed the Cuntz algebras $\O_X$ associated with a discrete
product systems $X$ of finite dimensional Hilbert spaces. Such an
algebra is the universal C*-algebra for certain
$*$-representations satisfying the defect free property. We shall
prove that the C*-algebra envelope of $\A_u$ is $\O_{X_u}$.

The atomic representations of the 2-graph semigroups $\Fth$ give
many insights to the general theory. For example we note contrasts
with the representation theory for the bidisc algebra, namely that
row contractive representations of $\A_u$ need not be contractive,
and that contractive representations of $\A_u$ need not be
completely contractive.

We remark that the structure of automorphisms of the algebras
$\A_u$ and a classification up to isometric isomorphism has been
given in \cite{PowSol}. In fact we make use of such automorphisms
and the failure of contractivity of row contractive
representations to show that  $\T_+(X_u)$ and $\A_u$ may fail to
be isometrically isomorphic.

%%%%%%%%%%%%%%%%%%%%%%%%%%%%%%%%%%%%%%%%%%%
\section{Two-graphs, semigroups and representations}
%%%%%%%%%%%%%%%%%%%%%%%%%%%%%%%%%%%%%%%%%%%

Let $\theta \in S_{m \times n}$ be a permutation of $m \times n$.
The semigroup $\Fth$ is generated by $e_1,\dots,e_m$ and
$f_1,\dots,f_n$.
The identity is denoted as $\mt$.
There are no relations among the $e$'s, so they generate
a copy of the free semigroup on $m$ letters, $\Fm$;
and there are no relations on the $f$'s,
so they generate a copy of $\Fn$.
There are \textit{commutation relations} between the $e$'s
and $f$'s given by
\[ e_i f_j = f_{j'} e_{i'} \quad\text{where } \theta(i,j) = (i',j') .\]

A word $w\in\Fth$ has a fixed number of $e$'s and $f$'s regardless
of the factorization; and the \textit{degree} of $w$ is $(k,l)$ if
there are $k$ $e$'s and $l$ $f$'s.
The \textit{length} of $w$ is $|w| = k+l$.
The commutation relations allow any word $w\in\Fth$
to be written with all $e$'s first, or with all $f$'s first, say
$w = e_uf_v = f_{v'}e_{u'}$.
Indeed, one can factor $w$ with any prescribed pattern of
$e$'s and $f$'s as long as the degree is $(k,l)$.
It is straightforward to see that the factorization is
uniquely determined by the pattern and that $\Ftheta$
has the unique factorization property.
See also \cite{KumPask,KP1,P1}.

We do not need the notion of a $k$-graph $(\Lambda, d)$, in which
$\Lambda$ is a countable small category with functor $d: \Lambda
\to \bZ^k_+$ satisfying a unique factorisation property. However,
in the single object (i.e. single vertex) rank $2$ case, with
$d^{-1}(1,0), d^{-1}(0,1)$ finite, the small category $\Lambda$,
viewed as a semigroup, is isomorphic to $\Fth$ for some $\theta$
and $d$ is equal to the degree map.

%%%%%%%%%%%%%%%%%%%%%%%%%%%
\begin{example}\label{E:favourite}
With $n=m=2$ we note that the relations
\begin{alignat*}{2}
e_1f_1 &= f_2e_1, &\qquad e_1f_2 &= f_1e_2 \\
e_2f_1 &= f_1e_1, &\qquad e_2f_2 &= f_2e_2.
\end{alignat*}
are those arising from the permutation $\theta$ in $S_4$ which is
the $3$-cycle $((1,1),(1,2),(2,1))$. We refer to $\Ftheta$ as the
forward 3-cycle semigroup. The reverse $3$-cycle semigroup is the
one arising from the $3$-cycle \\
$((1,1),(2,1),(1,2))$.
\end{example}

It can be shown that the 24 permutations of $S_4$
give rise to $9$ isomorphism classes of semigroups
$\Ftheta$, where we allow isomorphisms to exchange
the $e_i$'s for $f_j$'s.
The forward and reverse $3$-cycles give non-isomorphic
semigroups \cite{P1}.

%%%%%%%%%%%%%%%%%%%%%%%%%%%
\begin{example}\label{E:flip}
With $n=m=2$  the relations
\begin{alignat*}{2}
e_1f_1 &= f_1e_1, &\qquad e_1f_2 &= f_1e_2 \\
e_2f_1 &= f_2e_1, &\qquad e_2f_2 &= f_2e_2.
\end{alignat*}
are those arising from the $2$-cycle permutation $((1,2),(2,1))$.
We refer $\Ftheta$ in this case as the flip semigroup and
$\A_\theta$ as the flip algebra. The generated C*-algebra is
identified in Example \ref{flipCenv} and an illuminating atomic
representation is given in Example \ref{flipdefect}.
\end{example}

Consider the left regular representation $\lambda$ of these
relations. This is defined on $\Fockth$ with the orthonormal basis
$\{ \xi_x : x \in \Fth\}$ by  $\lambda(w) \xi_x = \xi_{wx}$. The
norm closed unital operator algebra generated by these operators
is denoted by $\A_\theta$.

%%%%%%%%%%%%%%%%%%%%%%%%%%%
\begin{defn}\label{repsdef}
A \text{representation} of $\Fth$ is a semigroup homomorphism
$\sigma: \Ftheta\to \B(\H)$. If it extends to a continuous
representation of the algebra $\Ath$, then it is said to be
\textit{contractive} or \textit{completely contractive} if the
extension to $\Ath$ has this property.

A representation of $\Ftheta$ is \textit{partially isometric} if
the range consists of partial isometries on the Hilbert space $\H$
and is \textit{isometric} if the range consists of isometries.

A partially isometric representation is \textit{atomic}
if there is an orthonormal basis which is permuted,
up to scalars, by each partial isometry.
That is, $\pi$ is atomic if there is a basis  $\{\xi_k : k\ge1\}$
so that for each $w \in \Fth$, $\pi(w) \xi_k = \alpha \xi_l$ for
some $l$ and some $\alpha \in \bT \cup \{0\}$.

A representation $ \sigma$ is \textit{row contractive} if
$[\sigma(e_1) \dots \sigma(e_m)]$ and
\\ $[\sigma(f_1) \dots
\sigma(f_n)]$ are row contractions, and is \textit{row isometric}
if these row operators are isometries. A row contractive
representation is \textit{defect free} if
\[
 \sum_{i=1}^m \sigma(e_i) \sigma(e_i)^* = I
 = \sum_{j=1}^n \sigma(f_j) \sigma(f_j)^*.
\]
A row isometric defect free representation is called a
\textit{$*$-repre\-sent\-ation} of $\Fth$.
We reserve the term defect free for row contractive representations.
\end{defn}
%%%%%%%%%%%%%%%%%%%%%%%%%%%

The row isometric condition is equivalent to saying that the
$\sigma(e_i)$'s are isometries with pairwise orthogonal range;
and the same is true for the $\sigma(f_j)$'s.
In a defect free, isometric representation, the $\sigma(e_i)$'s
generate a copy of the Cuntz algebra $\O_m$
(respectively the $\sigma(f_j)$'s generate $\O_n$)
rather than a copy of the Cuntz--Toeplitz algebra $\E_m$
(resp. $\E_n$) as is the case for the left regular representation.
The left regular representation $\lambda$ is
row isometric, but is not defect free.

There is a universal C*-algebra $\ca(\Fth)$ which can be described
by taking a direct sum $\pi_u$ of all $*$-representations on a
fixed separable Hilbert space, and forming the C*-algebra
generated by $\pi_u(\Fth)$. It is the unique C*-algebra generated
by a $*$-representation of $\Fth$ with the property that given any
$*$-representation $\sigma$, there is a $*$-homomorphism
$\pi:\ca(\Fth) \to \ca(\sigma(\Fth))$ so that $\sigma = \pi
\pi_u$. This C*-algebra is a higher rank graph C*-algebra in the
sense of Kumjian and Pask \cite{KumPask} for the rank two single
vertex graph determined by $\theta$.

%%%%%%%%%%%%%%%%%%%%%%%%%%%
\begin{eg}\label{3a_repn} \textbf{Type 3a representations.}
We now define an important family of atomic $*$-representations of
$\Ftheta$. The name refers to the classification obtained in
\cite{DPYatomic}.

Start with an arbitrary \textit{infinite word} or \textit{tail}
$\tau = e_{i_0}f_{j_0}e_{i_1}f_{j_1} \dots$.
Let $\G_s = \G := \Ftheta$, for $s=0,1,2,\dots $, viewed as a discrete set
on which the generators of $\Ftheta$ act as injective maps
by right multiplication, namely,
\[ \rho(w)g = gw \qforal g \in \G. \]
Consider $\rho_s = \rho(e_{i_s}f_{j_s})$ as a map from
$\G_s$ into $\G_{s+1}$.
Define $\G_\tau$ to be the injective limit set
\[
 \G_\tau = \lim_{\rightarrow} (\G_s, \rho_s ) ;
\]
and let $\iota_s$ denote the injections of $\G_s$ into $\G_\tau$.
Thus $\G_\tau$ may be viewed as the union of $\G_0, \G_1, \dots $
with respect to these inclusions.

The left regular action $\lambda$ of $\Fth$ on itself induces
corresponding maps on $\G_s$ by  $\lambda_s(w) g = wg$.
Observe that $\rho_s \lambda_s(w) = \lambda_{s+1}(w) \rho_s$ .
The injective limit of these actions is an action $\lambda_\tau$
of $\Fth$ on $\G_\tau$.
Let $\lambda_\tau$ also denote the corresponding representation of $\Fth$
on $\ltwo(\G_\tau)$.
Let  $\{ \xi_g : g \in \G_\tau\}$ denote the basis.
A moment's reflection shows that this provides a defect free,
isometric representation of $\Fth$; i.e.\ it is a $*$-representation.
\end{eg}

Davidson and Pitts \cite{DP1} classified the atomic
$*$-representations of $\Fm$ and showed that the irreducibles fall
into two types, known as ring representations and infinite tail
representations.
The 2-graph situation analysed in \cite{DPYatomic} turns out to be
considerably more complicated and in particular it is shown that
the irreducible atomic $*$-representations of $\Ftheta$ fall into
six types.
\bigskip

 We now define the more general unitary relation algebras
$\A_u$ which are associated with a unitary matrix $u =
(u_{(i,j),(k,l)})$ in $M_{mn}(\bC)$. Also we define the
(universal) tensor algebra $\T_+(X_u)$ considered by Solel
\cite{SolCP} and the generalised Cuntz algebra $\O(X_u)$, both of
which are associated with a product system $X_u$ for $u$.

Let $e_1,\dots,e_m$ and $f_1,\dots,f_n$ be viewed as bases for the
vector spaces $E = \bC^m$ and $F = \bC^n$ respectively. Then $u$
provides an identification $ u :  E\otimes F \to F\otimes E$ such
that
\[ e_i\otimes f_j = \sum_{i'=1}^m
\sum_{j'=1}^n u_{(i,j),(i',j')} f_{j'}\otimes e_{i'}
\]
or, equivalently,
\[
f_l\otimes e_k=\sum_{i=1}^m\sum_{j=1}^n
\bar{u}_{(i,j),(k,l)}e_i\otimes f_j.
\]
Moreover, for each pair $(k,l)$ in $\bZ_+^2$ with $k+l=r$,
 $u$ determines an unambiguous identification $G_1\otimes \dots \otimes G_r \to
H_1\otimes \cdots \otimes H_r$, whenever each $G_i$ and $H_i$ is
equal to $E$ or $F$ and is such that the multiplicity of $E$ and
$F$ in each product is $k$ and $l$ respectively. Thus these
different patterns of multiple tensor products of $E$ and $F$ are
identified with $E^{\otimes k}\otimes F^{\otimes l}$. The family
$X_u = \{E^{\otimes k}\otimes F^{\otimes l}\}$ together with the
associative multiplication $ \otimes $ induced by $u$, as above,
is an example of a \textit{product system over } $\bZ_+^2$,
consisting of finite dimensional Hilbert spaces.

Let $\H_u $ be the $\bZ_+^2$-graded Fock space $
\sum_{k=0}^\infty\sum_{l=0}^\infty  \oplus (E^{\otimes k}\otimes
F^{\otimes l})$ with the convention $E^{\otimes 0} = F^{\otimes 0}
= \bC$.
The left creation operators $L_{e_i}$, $L_{f_j}$ are defined on
$\H_u$ in the usual way. Thus
\[
L_{f_i}(e_{i_1}\otimes \cdots \otimes e_{i_k}\otimes
f_{j_1}\otimes \cdots \otimes f_{j_l}) = f_i\otimes(e_{i_1}\otimes
\cdots \otimes e_{i_k}\otimes f_{j_1}\otimes \cdots \otimes
f_{j_l}). \]

As in \cite{PowSol} we define the \textit{unitary relation
algebra} $\A_u$ to be the norm closed algebra generated by these
shift operators. Note that for $\Fth$ we have $\Ath = \A_u$ where
the unitary is  the permutation matrix $u$ with $u_{(i,j),(i',j')}
= 1$ if $\theta(i,j)=(i',j')$ and $u_{(i,j),(i',j')}=0$ otherwise.
In consistency with the notation for the left regular
representation of $\Fth$ we shall write $\xi_{e_uf_v}$ for the
basis element $e_{i_1}\otimes \cdots \otimes e_{i_k}\otimes
f_{j_1}\otimes \cdots \otimes f_{j_l}$ where (with tolerable
notation ambiguity) $u = i_1\dots i_k$ and $ v = j_1\dots j_l$.

We define $\bF_u^+$ to be the semigroup generated by the left
creation operators. Moreover we are concerned with representations
of this semigroup that satisfy the unitary commutation relations,
that is, with representations that extend to the complex algebra
$\bC[\bF_u^+]$ generated by the creation operators. This will be
an implicit assumption henceforth.
 Thus a unital representation $\sigma$  of $\Fu$ is determined by
two row operators $A= [A_1\dots A_m],$  $B=[B_1\dots B_n]$ that
satisfy the commutation relations
\[
A_iB_j = \sum_{i'=1}^m\sum_{j'=1}^n u_{(i,j),(i',j')}B_{j'}A_{i'}.
\]
The terms row contractive, row isometric, and partially isometric
are defined as before, and we say that $\sigma$ is contractive or
completely contractive if the extension of $\sigma$ to $\A_u$
exists with this property.

In \cite{SolCP}, Solel defines the universal non-selfadjoint tensor
algebra $\T_+(X)$ of a general product system $X$ of
correspondences. In the present context it is readily identifiable
with the universal operator algebra for the family of row
contractive representations $\pi_{A,B}$ and we take this as the
definition of the  \textit{tensor algebra} $\T_+(X_u)$.

On the C*-algebra side the \textit{generalised Cuntz algebra}
$\O_X$ associated with a product system $X$ is the universal
algebra for a natural family $*$-representation of $X$. See
\cite{Fow}, \cite{Fow2}, \cite{FowRae}. In the present context
this C*-algebra is the same as the universal operator algebra for
the family of defect free row isometric representations
$\pi_{S,T}$ and we take this as the definition of $\O_{X_u}$.

 We shall not
need the general framework of correspondences, for which the
associated C*-algebras are the Cuntz-Pimsner algebras. See
\cite{Rae} for an overview of this. However, let us remark that
the direct system $X_u$ is a direct system of correspondences over
$\bC$. The universality in \cite{SolCP} entails that $\T_+(X_u)$
is the completion of $\bC[\Fu]$ with respect to representations
$\pi_{A,B}$ for which each restriction $\pi_{A,B}|E^{\otimes
k}\otimes F^{\otimes l}$ is completely contractive with respect to
the matricial norm structure arising from the left regular
inclusions $E^{\otimes k}\otimes F^{\otimes l} \subseteq \A_u$.
These matricial spaces are row Hilbert spaces and so, taking
$(k,l) = (1,0)$ and $(0,1)$ we see that $A$ and $B$ are
necessarily row contractions. This necessary condition is also
sufficient. Indeed,  each restriction $\pi_{A,B}|E^{\otimes
k}\otimes F^{\otimes l}$ is  determined by a single row
contraction $[T_1 \dots T_N]$ (which is a tensor power of $A$ and
$B$) and these maps, which are of the form
\[
(\alpha_1, \dots ,\alpha_N) \to [\alpha_1T_1, \dots ,\alpha_NT_N],
\]
are completely contractive.

%%%%%%%%%%%%%%%%%%%%%%%%%%%
\begin{example}\label{tail_u}
We now show that as in the case of the permutation algebras
$\A_\theta$, the algebra $\A_u$ has a defect free row isometry
representation $\lambda_\tau$ associated with each infinite tail
$\tau$. In particular there are nontrivial $*$-representations
(Cuntz representations) for the product system $X_u$ and
$\O_{X_u}$ is nontrivial.

Consider, once again, an \textit{infinite word} or \textit{tail}
$\tau = e_{i_0}f_{j_0}e_{i_1}f_{j_1} \dots$.  Let $\H_t = \H_u$,
for $t=0,1,2,\dots ,$ and for $s = 0,1, \dots ,$ define isometric
Hilbert space injections $\rho_s : \H_{s} \to \H_{s+1}$ with $
\rho_{s}(\xi) = \xi \otimes e_{i_s}f_{j_s}$ for each $\xi \in
E^{\otimes k}\otimes F^{\otimes l}$ and all $k, l$.  Let $\H_\tau $
be the Hilbert space $ \dlim_\to \H_s$,  with each $\H_s$
identified as a closed subspace and let $\lambda_\tau$ denote the
induced isometric representation of $\bF_u^+$ on $\H_\tau$.

It follows readily that $\lambda_\tau$ is a row isometric
representation. Moreover, it is a $*$-representation of $\bF_u^+$,
that is, $\lambda_\tau$ has the defect free property. To see this,
let $\xi^{s}_{e_uf_v} $ denote the  basis element of $\H$ equal to
$\xi_{e_uf_v}$ in $\H_{s}$ where $e_u$ and $f_v$ are words as
before with lengths $|u|=k \geq 0, |v|= l \geq 0$. Then
$\xi^{s}_{e_uf_v} = \xi^{s+1}_{e_uf_ve_{i_s}f_{j_s}}$. The
commutation relations show that this vector lies both in the
subspace of $\H_{s+1}$ spanned by the spaces
$\lambda_\tau(e_i)E^{k}\otimes F^{l+1}$, $i=1,\dots ,m$, and in
the subspace spanned by the spaces
$\lambda_\tau(f_j)E^{k+1}\otimes F^{l}$, $j=1,\dots ,n$. It
follows that the range projections of the isometries
$\lambda_\tau(e_i)$, and also those of $\lambda_\tau(f_j)$, sum to
the identity.
\end{example}

%%%%%%%%%%%%%%%%%%%%%%%%%%%%%%%%%%%%%%%%%%%%
\section{$\ca(\A_u)$ and the C*-envelope}
%%%%%%%%%%%%%%%%%%%%%%%%%%%%%%%%%%%%%%%%%%%%

There are three natural C*-algebras associated with $\A_u$ namely
the generated C*-algebra $\ca(\A_u)$, the universal C*-algebra
$\O_{X_u}$, and the C*-envelope $\cenv(\A_u)$. By its universal
property the latter algebra is the smallest C*-algebra containing
$\A_u$ completely isometrically. In the case of $\A_\theta$ the
generated C*-algebra is simply the C*-algebra generated by the
left regular representation of the semigroup $\Fth$.

In this section we show that $\cenv(\Ath) = \ca(\Fth) $ and more
generally that $\cenv(\A_u) = \O_{X_u}$. Also we analyse ideals
and show how this algebra is a quotient of $\ca(\A_u)$.

%%%%%%%%%%%%%%%%%%%%%%%%%%%
\begin{lem}\label{cisom}
Let $\lambda_\tau$ be any type {\em 3a} representation of $\Fth$.
Then the imbedding of $\Ath$ into $\ca(\lambda_\tau(\Fth))$ is a
complete isometry. Also, if $\lambda_\tau$ is a tail
representation of $\A_u$ then the imbedding of $\A_u$ into
$\ca(\lambda_\tau(\A_u))$ is a complete isometry.
\end{lem}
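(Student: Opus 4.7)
The plan is to realize the left regular representation $\lambda$ of $\Fth$ on $\ltwo(\Fth)$ as the restriction of $\lambda_\tau$ to a canonical nested family of invariant subspaces of $\ltwo(\G_\tau)$, and then exploit the direct-limit construction to show these restrictions capture the full norm of $\lambda_\tau$ on polynomials. The same argument applied matrix-wise will yield complete isometry; and the $\A_u$ case is identical after replacing $\ltwo(\Fth)$ by $\H_u$.

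For each $s \ge 0$ set $H_s = \ltwo(\iota_s(\G_s)) \subseteq \ltwo(\G_\tau)$. By the construction of $\lambda_\tau$ in Example \ref{3a_repn}, $\lambda_\tau(w)\iota_s(g) = \iota_s(wg)$, so $H_s$ is $\lambda_\tau(\Fth)$-invariant and the restriction $\lambda_\tau(\cdot)|_{H_s}$ is unitarily equivalent to $\lambda$. With $P_s$ the projection onto $H_s$, invariance gives $\lambda_\tau(w)P_s = P_s\lambda_\tau(w)P_s$, so for any polynomial $p \in \bC[\Fth]$
\[
\|P_s\lambda_\tau(p)P_s\| \;=\; \|\lambda_\tau(p)|_{H_s}\| \;=\; \|\lambda(p)\| \;=\; \|p\|_{\Ath},
\]
which already yields $\|p\|_{\Ath} \le \|\lambda_\tau(p)\|$. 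For the reverse inequality, note that $\G_\tau = \bigcup_s \iota_s(\G_s)$ is a nested union, so $P_s \uparrow I$ in \sot. Combined with invariance this forces $P_s\lambda_\tau(p)P_s \to \lambda_\tau(p)$ strongly: for $x \in \ltwo(\G_\tau)$, $P_s x \to x$ implies $\lambda_\tau(p)P_s x \to \lambda_\tau(p) x$, while $\lambda_\tau(p)P_s x \in H_s$ by invariance gives $P_s\lambda_\tau(p)P_s x = \lambda_\tau(p)P_s x$. Lower semicontinuity of the operator norm under \sot-limits of uniformly bounded nets then yields $\|\lambda_\tau(p)\| \le \liminf_s \|P_s\lambda_\tau(p)P_s\| = \|p\|_{\Ath}$. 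Hence $\lambda(p) \mapsto \lambda_\tau(p)$ extends to an isometric unital homomorphism $\Ath \hookrightarrow \ca(\lambda_\tau(\Fth))$.

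For complete isometry, the identical argument on matrices with $P_s \otimes I_k$ in place of $P_s$ goes through unchanged: both invariance and \sot-approximation are preserved under amplification. For the $\A_u$ tail representation, the same plan works with $H_s = \H_s \subseteq \H_\tau$ from Example \ref{tail_u}: left multiplication by the creation operators $L_{e_i}, L_{f_j}$ commutes with the right-tensoring injections $\rho_s(\xi) = \xi \otimes e_{i_s}f_{j_s}$, so each $\H_s$ is $\lambda_\tau$-invariant with restriction unitarily equivalent to the left regular representation on $\H_u$. The only real substance is the compatibility of left multiplication with the $\rho_s$, which is precisely the content of the identity $\rho_s\lambda_s(w) = \lambda_{s+1}(w)\rho_s$ already built into the constructions; I anticipate no serious obstacle beyond this bookkeeping.
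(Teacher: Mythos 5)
Your proposal is correct and follows essentially the same route as the paper: realize $\lambda_\tau$ as an inductive limit of invariant subspaces on each of which the restriction is unitarily equivalent to $\lambda$, and recover the norm of a (matrix) polynomial from these restrictions via the increasing union. The only difference is that you spell out the \textsc{sot}-approximation and lower semicontinuity step that the paper leaves implicit.
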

%%%%%%%%%%%%%%%%%%%%%%%%%%%

\begin{proof}
Let $\A$ be the norm closed subalgebra of
$\ca(\lambda_\tau(\Fth))$ generated by $\lambda_\tau(\Fth)$. We
showed in Example~\ref{3a_repn} that $\lambda_\tau$ is an
inductive limit of copies of $\lambda$. That is, $\ltwo(\G_\tau)$
is the closure of an increasing union of subspaces $\ltwo(\G_s)$,
each is invariant under $\A$, and the restriction of
$\lambda_\tau$ to $\ltwo(\G_s)$ is unitarily equivalent to
$\lambda$. The norm of any matrix polynomial is thus determined by
its restrictions to these subspaces, and the norm on each one is
precisely the norm in $\Ath$. It follows that $\A$ is completely
isometrically isomorphic to $\Ath$. The same argument applies to a
tail representation of the unitary relation algebra $\A_u$.
\end{proof}

%%%%%%%%%%%%%%%%%%%%%%%%%%%
\begin{cor}\label{C:quotient}
There is a canonical quotient map from $\ca(\Fth)$ onto
$\ca(\lambda_\tau(\Fth))$ and, more generally, from $\O_{X_u}$
onto $\ca(\lambda_\tau(\A_u))$. Also there is a canonical quotient
map from  $\ca(\lambda_\tau(\A_u))$ onto $\cenv(\A_u)$.
\end{cor}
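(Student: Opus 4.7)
The plan is to derive each of the three canonical quotient maps from a universal property, so that no real work beyond Lemma \ref{cisom} and the examples is required.

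First, for the map $\ca(\Fth) \twoheadrightarrow \ca(\lambda_\tau(\Fth))$, I would invoke the universal property of $\ca(\Fth)$ recalled in Section 2: every $*$-representation of $\Fth$ factors through $\ca(\Fth)$ via the universal representation $\pi_u$. Since Example \ref{3a_repn} showed that the type 3a representation $\lambda_\tau$ is a defect free, isometric (hence $*$-) representation of $\Fth$, the resulting $*$-homomorphism $\ca(\Fth) \to \ca(\lambda_\tau(\Fth))$ sends $\pi_u(w)$ to $\lambda_\tau(w)$ for each $w \in \Fth$, and so is surjective onto the generated C*-algebra. An entirely analogous argument, using Example \ref{tail_u} which verified the defect free row isometric property of the tail representation $\lambda_\tau$ of $\bF_u^+$, yields the quotient $\O_{X_u} \twoheadrightarrow \ca(\lambda_\tau(\A_u))$ from the universal property of $\O_{X_u}$ as the universal algebra for defect free row isometric representations of $X_u$.

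For the last map $\ca(\lambda_\tau(\A_u)) \twoheadrightarrow \cenv(\A_u)$, I would apply the universal property of the C*-envelope to the completely isometric embedding $\A_u \hookrightarrow \ca(\lambda_\tau(\A_u))$ supplied by Lemma \ref{cisom}. Since $\ca(\lambda_\tau(\A_u))$ is by construction generated as a C*-algebra by the image of $\A_u$, it is a C*-cover of $\A_u$; thus the defining property of $\cenv(\A_u)$ produces a unique surjective $*$-homomorphism onto $\cenv(\A_u)$ restricting to the identity on $\A_u$. Concretely, one realises $\cenv(\A_u)$ as the quotient by the Shilov boundary ideal of $\ca(\lambda_\tau(\A_u))$.

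I do not anticipate any genuine obstacle: the content of the corollary is a direct packaging of the three universal properties together with Lemma \ref{cisom}. The only mild point of care is verifying that the images of the generators in the target C*-algebras actually generate them as C*-algebras so that each $*$-homomorphism is surjective, but this is immediate from how $\ca(\lambda_\tau(\Fth))$ and $\ca(\lambda_\tau(\A_u))$ were defined.
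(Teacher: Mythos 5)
Your proposal is correct and follows the same route as the paper: the first two quotient maps come from the universal properties of $\ca(\Fth)$ and $\O_{X_u}$ applied to the $*$-representation $\lambda_\tau$ (Examples \ref{3a_repn} and \ref{tail_u}), and the third comes from the universal property of the C*-envelope applied to the completely isometric embedding supplied by Lemma \ref{cisom}. The paper's own proof is just a terser statement of exactly these three observations.
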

%%%%%%%%%%%%%%%%%%%%%%%%%%%

\begin{proof}
That there are canonical quotient maps from the universal
C*-algebras  $\ca(\Fth)$ and $\O_{X_u}$ follows from the fact that
$\lambda_\tau$ is a $*$-representation.

By Lemma~\ref{cisom},
$\A_u$ imbeds completely isometrically in
$\ca(\lambda_\tau(\A_u))$. Hence there is a canonical quotient
map of $\ca(\lambda_\tau(\A_u))$ onto $\cenv(\A_u)$ which is the
identity on $\A_u$.
\end{proof}

%%%%%%%%%%%%%%%%%%%%%%%%%%%%%%%%%%%%%%%%%
\subsection{Gauge automorphisms}
First we consider the graph C*-algebra $\ca(\Fth)$. It will be
convenient in this subsection to consider a faithful
representation $\pi$, or equivalently a $*$-representation $\pi$
of $\Fth$, so that $\ca(\Fth) = \ca(\pi(\Fth))$. The universal
property of $\ca(\Fth)$ yields a family of \textit{gauge
automorphisms} $\gamma_{\alpha,\beta}$ for $\alpha, \beta \in \bT$
determined by
\[
 \gamma_{\alpha,\beta}(\pi(e_i)) = \alpha \pi(e_i) \qand
 \gamma_{\alpha,\beta}(\pi(f_j)) = \beta \pi(f_j) .
\]
Integration around the 2-torus yields a faithful expectation
\[ \Phi(X) = \int_{\bT^2}  \gamma_{\alpha,\beta}(X) \,d\alpha\,d\beta .\]
It is easy to check on monomials that the range is spanned by
words of degree $(0,0)$ (where $\pi(e_i)^*$ and $\pi(f_j)^*$ count
as degree $(-1,0)$ and $(0,-1)$ respectively).

Kumjian and Pask identify this range as an AF C*-algebra. In our
case, the analysis is simplified.  To recap, the first observation
is that any monomial in $e$'s, $f$'s and their adjoints can be
written with all of the adjoints on the right. Clearly the row
isometric condition means that
\[ \pi(f_i)^*\pi(f_j) = \delta_{ij} = \pi(e_i)^*\pi(e_j) .\]
Also,  observe that if $f_j e_k = e_{k'}f_{j_k}$, for $1 \le k \le
m$, then
\begin{align*}
 \pi(e_i)^*\pi(f_j) &= \pi(e_i)^*\pi(f_j)
 (\sum_k \pi(e_k)\pi(e_k)^*)\\
 &= \sum_k \pi( e_i)^* \pi(e_{k'})\pi(f_{j_k})\pi(e_k)^*
 = \sum_k \delta_{ik'} \pi(f_{j_k})\pi(e_k)^*.
\end{align*}
So, in the universal representation,  every word in the generators
and their adjoints  can be expressed as a sum of words of the form
$xy^*$ for $x,y \in \Fth$.

Next,  observe that for each integer $s \ge1$, the words $W_s$ of
degree $(s,s)$ determine a family of degree $(0,0)$ words,
namely\\ $\{ \pi(x)\pi(y)^* : x,y \in W_s \}$. It is clear that
\[ \pi(x_1)\pi(y_1)^* \pi(x_2)\pi(y_2)^* = \delta_{y_1,x_2}
\pi(x_1)\pi(y_2)^* .\] Thus these operators form a family of
matrix units that generate a unital copy $\fF_s$ of the matrix
algebra $M_{(mn)^s}(\bC)$. Moreover, these algebras are nested
because the identity
\[
 \pi(x)\pi(y)^* =
 \pi(x) \sum_i \pi(e_i)\pi(e_i)^* \sum_j \pi(f_j)\pi(f_j)^*\ \pi(y)^*
\]
allows one to write elements of $\fF_s$ in terms of
the basis for $\fF_{s+1}$.

It follows that  the range of the expectation $\Phi$ is the
$(mn)^\infty$-UHF algebra $\fF = \ol{\bigcup_{s\ge1} \fF_s}$. This
is a simple C*-algebra.

An almost identical argument is available for the C*-algebra
$\O_{X_u}$. (See also \cite[Proposition 2.1]{Fow}.) As above there
is an abelian group of gauge automorphisms $\gamma_{\alpha,\beta}$
and the map $\Phi :\O_{X_u} \to \O_{X_u}$ is a faithful
expectation onto its range. Moreover the range is equal to the
fixed point algebra, $\O_{X_u}^\gamma$, of the automorphism group
and this can be  identified with a UHF C*-algebra, $\fF_{X_u}$
say. To see this, note that in the universal representation, we have
\begin{align*}
 e_i^*f_j &= e_i^*f_j(\sum_k e_ke_k^*)\\
 &= \sum_k \sum_{i'=1}^m\sum_{j'=1}^n
 \bar{u}_{(i',j'),(k,j)}e_i^*e_{i'}f_{j'}e_k^* \\
 &= \sum_k \sum_{j'=1}^n  \bar{u}_{(i,j'),(k,j)} f_{j'}e_k^*.
\end{align*}
This, as before, leads to the fact that the operators
$\pi(x)\pi(y^*)$, for $x,y \in X_u$, span a dense $*$-algebra in
$\O_{X_u}$. Moreover, the span of
\[
\{\pi(x)\pi(y^*): \ x,y \in E^{\otimes s} \otimes F^{\otimes t},
\ (s,t) \in \bZ^2_+\}
\]
has closure equal to the range of $\Phi$ and, as before, this is a
UHF C*-algebra.

%%%%%%%%%%%%%%%%%%%%%%%%%%%
\begin{lem} \label{L:gaugeunitary}
Let $\lambda_\tau$ be a tail representation of $\A_u$. Then the
C*-algebras $\ca(\lambda_\tau(\A_u))$ and $\cenv(\A_u)$ carry
gauge automorphisms which commute with the natural quotient maps
\[
\O_{X_u} \rightarrow \ca(\lambda_\tau(\A_u)) \rightarrow
\cenv(\A_u)
\]
In the case of $\ca(\lambda_\tau(\A_u))$, the gauge automorphisms
are unitarily implemented.
\end{lem}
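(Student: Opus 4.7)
The plan is to implement the gauge automorphisms concretely on the tail Hilbert space $\H_\tau$ and then descend to the C*-envelope by an ideal-maximality argument. Recall from Example~\ref{tail_u} that $\H_\tau = \dlim \H_s$ with $\H_s = \H_u$ and connecting isometries $\rho_s(\xi) = \xi \otimes e_{i_s} f_{j_s}$. Each $\H_s$ carries the $\bZ^2_+$-grading by the summands $E^{\otimes k} \otimes F^{\otimes l}$, and $\rho_s$ shifts this grading by $(1,1)$ (the unitary relations move the trailing $e_{i_s}$ past the $f$-block but preserve the $\bZ^2_+$-degree). Consequently $\H_\tau$ acquires a natural $\bZ^2$-grading, in which a vector represented at stage $s$ by an element of $E^{\otimes k} \otimes F^{\otimes l}$ has well-defined relative degree $(k-s,\,l-s)$, independent of the choice of representative.

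For $(\alpha,\beta) \in \bT^2$ I would then let $U_{\alpha,\beta}$ act as the scalar $\alpha^a \beta^b$ on the degree $(a,b)$ subspace of $\H_\tau$. Consistency across the maps $\rho_s$ is immediate from the shift of $(1,1)$, so $U_{\alpha,\beta}$ is a unitary on $\H_\tau$. Since the creation operators $L_{e_i}$ and $L_{f_j}$ on $\H_u$ raise the $\bZ^2_+$-degree by $(1,0)$ and $(0,1)$ respectively — in the $L_{f_j}$ case producing a linear combination of basis vectors all of the same degree $(k,l+1)$ — the operators $\lambda_\tau(e_i)$ and $\lambda_\tau(f_j)$ shift the $\bZ^2$-grading on $\H_\tau$ by $(1,0)$ and $(0,1)$. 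A short computation then gives
\[
U_{\alpha,\beta}\, \lambda_\tau(e_i)\, U_{\alpha,\beta}^* = \alpha\, \lambda_\tau(e_i), \qquad U_{\alpha,\beta}\, \lambda_\tau(f_j)\, U_{\alpha,\beta}^* = \beta\, \lambda_\tau(f_j),
\]
so $\Ad U_{\alpha,\beta}$ is a unitarily implemented gauge automorphism of $\ca(\lambda_\tau(\A_u))$. By construction it agrees on generators with the gauge automorphism $\gamma_{\alpha,\beta}$ of $\O_{X_u}$ and therefore commutes with the quotient map $\O_{X_u} \to \ca(\lambda_\tau(\A_u))$ of Corollary~\ref{C:quotient}.

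For the passage to $\cenv(\A_u)$ I would invoke the universal property of the Shilov boundary. Let $J \subseteq \ca(\lambda_\tau(\A_u))$ be the boundary ideal, so that $\cenv(\A_u) = \ca(\lambda_\tau(\A_u))/J$. Each $\Ad U_{\alpha,\beta}$ carries $\A_u$ to itself and restricts there to a completely isometric automorphism (it merely rescales the generators by unimodular scalars). Hence $\Ad U_{\alpha,\beta}(J)$ is a closed two-sided ideal whose quotient map is still completely isometric on $\A_u$, so maximality of $J$ among such ideals forces $\Ad U_{\alpha,\beta}(J) \subseteq J$; applying the same argument to $(\alpha,\beta)^{-1}$ yields equality. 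Therefore $\Ad U_{\alpha,\beta}$ descends to an automorphism of $\cenv(\A_u)$ that commutes with the quotient $\ca(\lambda_\tau(\A_u)) \to \cenv(\A_u)$.

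The main technical point in all of this is the bookkeeping for the $\bZ^2$-grading on $\H_\tau$: one must check that the grading is well defined and that $\lambda_\tau(f_j)$ — which is only a single basis vector in the permutation case but a linear combination of basis vectors in the general unitary relation case — shifts the relative degree by $(0,1)$ term-by-term. Once this is in place, the unitary implementation on $\ca(\lambda_\tau(\A_u))$ is immediate, and the descent to $\cenv(\A_u)$ is formal from the maximality characterisation of the boundary ideal.
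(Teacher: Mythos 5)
Your proposal is correct and follows essentially the same route as the paper: the unitary $U_{\alpha,\beta}$ you build from the relative $\bZ^2$-grading on $\H_\tau$ is exactly the paper's diagonal unitary $U_{\alpha,\beta}\xi^s_{e_uf_v}=\alpha^{|u|-s}\beta^{|v|-s}\xi^s_{e_uf_v}$, with the same well-definedness check coming from the $(1,1)$-shift of the connecting maps $\rho_s$. The only (immaterial) difference is in the descent to $\cenv(\A_u)$: you argue via invariance of the Shilov boundary ideal under $\Ad U_{\alpha,\beta}$, whereas the paper invokes the universal property directly to extend the completely isometric automorphism of $\A_u$ to a $*$-automorphism of $\cenv(\A_u)$ --- two standard and equivalent formulations of the same fact.
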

%%%%%%%%%%%%%%%%%%%%%%%%%%%

\begin{proof}
We use the notation of Example~\ref{tail_u}.
Thus $\xi^{s+1}_{we_{i_s}f_{j_s}} = \xi^s_w$ and
\[
\xi^s_{e_uf_v}=\xi^{s+k}_w = \xi^{s+k}_{e_{u'}f_{v'}} ,
\]
where
\[
 w = e_uf_ve_{i_s}f_{j_s}\dots e_{i_s+k-1}f_{j_s+k-1}
 = e_{u'}f_{v'} ;
\]
moreover $|u'| = |u|+k$ and $|v'|=|v|+k.$

Thus we may define a well-defined diagonal unitary
$U_{\alpha,\beta}$ on $\H_\tau$ such that, for $s \geq 0$,
\[
 U_{\alpha,\beta} \xi_{e_uf_v}^{s} =
 \alpha^{|u|-s}\beta^{|v|-s} \xi_{e_uf_v}^{s}.
\]
Now
\[
 U_{\alpha,\beta} \lambda_\tau(e_i)
 U_{\alpha,\beta}^* \xi_{e_uf_v}^{s}
 = \alpha \xi_{e_ie_uf_v}^{s}
 = \alpha \lambda_\tau(e_i) \xi_{e_uf_v}^{s}
\]
and
\[
 U_{\alpha,\beta} \lambda_\tau(f_j)
 U_{\alpha,\beta}^* \xi_{e_uf_v}^{s}
 = \beta \xi_{f_je_uf_v}^{s}
 = \beta\lambda_\tau(f_j) \xi_{e_uf_v}^{s}.
\]
It follows that  $\Ad U_{\alpha,\beta}$ determines an automorphism
of $\lambda_\tau(\A_u)$, denoted also by $\gamma_{\alpha,\beta}$
in view of the gauge action.

These automorphisms are completely isometric, since they are
restrictions of $*$-automorphisms.
So by the universal
property of the C*-envelope, each automorphism has a unique
completely positive extension to $\cenv(\A_u)$ and the extension
is a $*$-isomorphism. In this way a gauge action is determined on
$\cenv(\A_u)$. That the maps commute with the quotients is
evident.
\end{proof}

The next  lemma follows a standard technique in graph C*-algebra.
See \cite[Theorem~3.4]{KumPask} for example.

%%%%%%%%%%%%%%%%%%%%%%%%%%%
\begin{lem}
Let $\pi : \O_{X_u} \to B$ be a homomorphism of C*-algebras and
let $\delta : \bT^2 \to \Aut (B)$ be an action such that $\pi
\circ \gamma_{\alpha,\beta} = \delta_{\alpha,\beta} \circ \pi$ for
all $(\alpha, \beta)$ in $\bT^2$. Suppose that $\pi$ is nonzero on
the UHF subalgebra $\fF_{X_u}$. Then $\pi$ is faithful.
\end{lem}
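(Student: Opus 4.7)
The plan is to adapt the standard gauge-invariant uniqueness argument to this setting. The main tool is the faithful expectation $\Phi:\O_{X_u}\to\fF_{X_u}$ obtained by averaging the gauge action, combined with the simplicity of the UHF algebra $\fF_{X_u}$. The strategy is to produce a companion expectation $\Phi_B$ on (a suitable subalgebra of) $B$ so that $\pi\circ\Phi=\Phi_B\circ\pi$, and then to trap any element of $\ker\pi$ inside $\fF_{X_u}$ where faithfulness is automatic.

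First, I would use the hypothesis that $\delta:\bT^2\to\Aut(B)$ is a (point-norm continuous) group action to define
\[
 \Phi_B(b) = \int_{\bT^2} \delta_{\alpha,\beta}(b)\, d\alpha\, d\beta,
\]
which is a completely positive contractive projection onto the fixed-point algebra $B^\delta$. The intertwining $\pi\circ\gamma_{\alpha,\beta}=\delta_{\alpha,\beta}\circ\pi$ gives, for every $x\in\O_{X_u}$,
\[
 \Phi_B(\pi(x)) = \int_{\bT^2}\delta_{\alpha,\beta}(\pi(x))\,d\alpha\,d\beta
 = \int_{\bT^2}\pi(\gamma_{\alpha,\beta}(x))\,d\alpha\,d\beta
 = \pi(\Phi(x)),
\]
where the last equality uses continuity of $\pi$ to move it past the Bochner integral. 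Thus $\pi\circ\Phi=\Phi_B\circ\pi$.

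Next I would invoke simplicity of $\fF_{X_u}$. Since the UHF algebra $\fF_{X_u}$ is simple and $\pi|_{\fF_{X_u}}$ is a nonzero $*$-homomorphism by hypothesis, its restriction to $\fF_{X_u}$ is isometric. Now suppose $a\in\ker\pi$. Then $a^*a\in\ker\pi$, so
\[
 \pi(\Phi(a^*a)) = \Phi_B(\pi(a^*a)) = 0.
\]
Since $\Phi(a^*a)\in\fF_{X_u}$ and $\pi$ is injective there, we conclude $\Phi(a^*a)=0$. Faithfulness of $\Phi$ (established earlier by the analogue of the Kumjian--Pask argument for the UHF core) then forces $a^*a=0$, hence $a=0$, so $\pi$ is faithful.

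The main technical point that requires attention is justifying that $\Phi_B$ is well-defined, i.e.\ that the action $\delta$ is strongly continuous on the relevant range so that the Bochner integral exists. In the worst case one only has to work on the C*-subalgebra generated by $\pi(\O_{X_u})$, where continuity of $\delta$ is inherited from continuity of $\gamma$ on $\O_{X_u}$ via the intertwining identity and density. Once this is in place the remainder of the argument is purely formal, and the heart of the lemma is simplicity of $\fF_{X_u}$ together with faithfulness of $\Phi$, both already in hand.
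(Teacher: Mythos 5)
Your proposal is correct and follows essentially the same route as the paper: average the gauge action on $B$ to get an expectation $\Phi_B$ intertwining with $\Phi$ via $\pi$, trap $\Phi(a^*a)$ in the simple UHF core where $\pi$ is automatically faithful, and conclude from faithfulness of $\Phi$. The only difference is that you spell out the (routine) continuity issues for the Bochner integral, which the paper leaves implicit.
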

%%%%%%%%%%%%%%%%%%%%%%%%%%%

\begin{proof}
As before let $\Phi$ be the expectation map on $\O_{X_u}$, and let
$\Phi_\delta$ the expectation on $B$ induced by $\delta$.
If $\pi(x) = 0$, then
\[ 0=\Phi_\delta(\pi(x^*x)) = \pi(\Phi(x^*x)) .\]
Since $\fF_{X_u}$ is simple  and the restriction of $\pi$ to it is
non zero by assumption it follows that the restriction is
faithful. Thus $\Phi(x^*x) = 0$ and now the faithfulness of $\Phi$
implies $x=0$.
\end{proof}

%%%%%%%%%%%%%%%%%%%%%%%%%%%
\begin{thm}\label{Cenv}
The C*-envelope of the unitary relation algebra $\A_u$ is the
generalised Cuntz algebra $\O_{X_u}$ of the product system $X_u$
for the unitary matrix $u$. In particular the C*-envelope of
$\Ath$ is $\ca(\Fth)$. Also each tail representation
$\lambda_\tau$ extends to a faithful representation of $\O_{X_u}$.
\end{thm}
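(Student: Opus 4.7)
The plan is to apply the gauge-invariant uniqueness lemma just proved to the composite quotient map
\[ q : \O_{X_u} \longrightarrow \ca(\lambda_\tau(\A_u)) \longrightarrow \cenv(\A_u) \]
supplied by Corollary \ref{C:quotient}. I would first check the two hypotheses of that lemma for $q$. By Lemma \ref{L:gaugeunitary}, both constituent quotients intertwine the respective $\bT^2$-gauge actions, so $q$ inherits gauge-equivariance. Also, $q$ is unital, hence nonzero on the UHF fixed-point subalgebra $\fF_{X_u}$; since $\fF_{X_u}$ is simple, $q|_{\fF_{X_u}}$ is in fact injective.

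With the hypotheses in place, the preceding lemma immediately yields that $q$ is faithful. Since $q$ is also surjective (being a composition of quotient maps), it is a $*$-isomorphism, and so $\cenv(\A_u) \cong \O_{X_u}$. Specialising to the case where $u$ is the permutation matrix associated with $\theta$ --- in which case $\O_{X_\theta}$ coincides with $\ca(\Fth)$, since defect free row isometric representations of the permutation semigroup are exactly the $*$-representations of $\Fth$ --- gives $\cenv(\Ath) = \ca(\Fth)$. The final assertion, that each tail representation $\lambda_\tau$ extends to a faithful representation of $\O_{X_u}$, then follows as a byproduct: since the injective $q$ factors through $\ca(\lambda_\tau(\A_u))$, the quotient $\O_{X_u} \to \ca(\lambda_\tau(\A_u))$ is already injective, hence a $*$-isomorphism, and this map is precisely the universal extension of $\lambda_\tau$.

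I do not expect any significant obstacles. The substantive work --- constructing compatible gauge actions on $\cenv(\A_u)$ and proving the gauge-invariant uniqueness lemma --- has already been carried out in the previous two lemmas, and Lemma \ref{cisom} has secured the completely isometric embedding of $\A_u$ into $\ca(\lambda_\tau(\A_u))$ needed for the C*-envelope quotient to exist. What remains is this short bookkeeping argument, whose only subtlety is confirming that the composite $q$ inherits gauge-equivariance and non-vanishing on $\fF_{X_u}$, both of which are immediate.
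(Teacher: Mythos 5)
Your proposal is correct and follows essentially the same route as the paper, which likewise applies the gauge-invariant uniqueness lemma to the composite quotient $q:\O_{X_u}\to\cenv(\A_u)$ furnished by Corollary \ref{C:quotient} and Lemma \ref{L:gaugeunitary}. You merely spell out the verification of the lemma's hypotheses (gauge-equivariance of the composite and nonvanishing on $\fF_{X_u}$ via unitality) and the deduction of the final faithfulness claim, all of which the paper leaves implicit.
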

%%%%%%%%%%%%%%%%%%%%%%%%%%%

\begin{proof}
This is immediate from the lemma in view of the fact that there is
a quotient map $q$ of $\O_{X_u}$ onto $\cenv(\A_u)$ which commutes
with gauge automorphisms on both algebras.
\end{proof}

%%%%%%%%%%%%%%%%%%%%%%%%%%%
\begin{eg}\label{flipCenv}
Consider the flip graph semigroup $\Fth$ of Example~\ref{E:flip}.
Kumjian and Pask observed that $\ca(\Fth) \simeq \O_2 \otimes
\rC(\bT)$. To see this in an elementary way, consider the
relations
\[ e_i f_j = f_i e_j \qforal 1 \le i,j \le 2 .\]
Suppose that $\sigma(e_i)=E_i$ and $\sigma(f_j)=F_j$ is a
$*$-representation. Then $E_i$ and $F_i$ have the same range for
$i=1,2$. Therefore there are unitaries $U_i$ so that $F_i =
E_iU_i$. Then the commutation relations show that
\begin{alignat*}{2}
E_1^2U_1 &= E_1U_1E_1 &\quad E_1E_2U_2 &= E_1U_1E_2\\
E_2E_1U_1 &= E_2U_2E_1 &\quad E_2^2U_2 &= E_2U_2E_2 .
\end{alignat*}
Therefore
\[ E_1U_1=U_1E_1 = U_2E_1 \qand E_2U_2=U_2E_2=U_1E_2 .\]
It follows that $U_1=U_2 =:U$ on $\ran E_1 + \ran E_2 = \H$; and
that $U$ commutes with $\ca(E_1,E_2) \simeq\O_2$.

Consequently an irreducible $*$-representation $\pi$ of
$\ca(\Fth)$ sends U to a scalar $tI$, and the restriction of
$\pi$ to $\ca(e_1,e_2)$ is a $*$-representation of $\O_2$. All
representations of $\O_2$ are $*$-equivalent because $\O_2$ is
simple. Therefore, $\pi(f_i) = t\pi(e_i)$ and $\ca(\pi(\Fth))
\simeq\O_2$. It is now easy to see that
\[ \ca(\Fth) \simeq \O_2 \otimes \rC(\bT) \simeq \rC(\bT,\O_2) .\]
By Theorem \ref{Cenv}, this is also the C*-envelope $\cenv(\Ath)$.
The structure of $\ca (\Ath)$ is will now follow from Lemmas
\ref{I+Jquotient} and \ref{I+Jideals}.
\end{eg}

\bigskip

We can use Theorem \ref{Cenv} and the theory of C*-envelopes and
maximal dilations to identify the completely contractive
representations of $\A_u$ with those that have dilations to defect
free isometric representations, that is, to $*$-representations.
As we note in the next section, the contractive representations of
$\A_u$ form a wider class. First we recap the significance of
maximal dilations.

Recall that a representation $\pi$ of an algebra $\A$, or
semigroup, on a Hilbert space $\K$ is a \textit{dilation} of a
representation $\sigma$ on a Hilbert space $\H$ if there is an
injection $J$ of $\H$ into $\K$ so that $J\H$ is a semi-invariant
subspace for $\pi(\A)$ (i.e.\ there is a $\pi(\A)$-invariant
subspace $\M$ orthogonal to $J\H$ so that $\M \oplus J\H$ is also
invariant) so that $J^* \pi(\cdot) J = \sigma(\cdot)$.

A dilation $\pi$ of $\sigma$ is \textit{minimal} if the smallest
reducing subspace containing $J\H$ is all of $\K$. This minimal
dilation is called unique if for any two minimal dilations $\pi_i$
on $\K_i$, there is a unitary operator $U$ from $\K_1$ to $\K_2$
such that $J_2 = UJ_1$ and $\pi_2 = \Ad U \pi_1$.

\pagebreak[3]
Generally we are interested in dilations within the same class,
such as row contractive representations of semigroups which are
generated by two free families, or completely contractive
representations of algebras. A representation $\sigma$ within a
certain class of representations is called \textit{maximal} if
every dilation $\pi$ of $\sigma$ has the form $\pi\simeq\sigma
\oplus \pi'$, or equivalently $J\H$ always reduces $\pi$. It is
possible for a dilation to be both minimal and maximal.

In his seminal paper \cite{Arv1}, Arveson showed how to understand
non-selfadjoint operator algebras in terms of dilation theory. He
defined the C*-envelope of an operator algebra $\A$ to be the
unique C*-algebra $\cenv(\A)$ containing a completely
isometrically isomorphic copy of $\A$ which generates it, but any
proper quotient is no longer completely isometric on $\A$. He was
not able to show that this object always exists, but that was
later established by Hamana \cite{Ham}. For background on
C*-envelopes, see Paulsen~\cite{Pau}.

A completely contractive unital representation of an operator
algebra $\A \subset \ca(\A)$ has the \textit{unique extension
property} if there is a unique completely positive extension to
$\ca(\A)$ and this extension is a $*$-representation. If this
$*$-representation is irreducible, it is called a \textit{boundary
representation}.

There is a new proof of the existence of the C*-envelope.
Dritschel and McCullough \cite{DMc} showed that the C*-envelope
can be constructed by exhibiting sufficiently many representations
with the unique extension property. Arveson \cite{Arv_choq}
completed his original program by then showing that it suffices to
use irreducible representations.

The insight of Dritschel and McCullough, based on ideas of Agler,
was that the maximal completely contractive dilations coincide
with dilations with the unique extension property. Therefore
maximal dilations factor through the C*-envelope. In particular, a
maximal representation $\sigma$ which is completely isometric
yields the C*-envelope: $\cenv(\A) = \ca(\sigma(\A))$.

 From a different point of view, this was also observed by
Muhly and Solel \cite{MSmem}. They show that a completely
contractive unital representation factors through the C*-envelope
if and only if it is orthogonally injective and orthogonally
projective.  While we do not define these notions here, we point
out that it is easy to see that these two properties together are
equivalent to being a maximal representation.

The upshot of the theory of C*-envelopes and maximal dilations is
the following consequence. Recall that a $*$-representation of
$\bF^+_u$ is a representation satisfying the unitary commutation
relations which is isometric and defect free.

\pagebreak[3]
%%%%%%%%%%%%%%%%%%%%%%%%%%%
\begin{thm}\label{DilnAth}
Let $\sigma$ be  a unital representation  $\Fu$ satisfying the
unitary commutation relations. Then the following are equivalent:
\begin{enumerate}
\item $\sigma $ dilates to a $*$-representation of $\Fu$.
\item $\sigma $ is completely contractive, that is, $\sigma$
extends to a completely contractive representation of $\A_u$.
\end{enumerate}
In  particular a unital representation of the semigroup
$\bF_\theta^+$ dilates to a $*$-representation if and only if it
is completely contractive.
\end{thm}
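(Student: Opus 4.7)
The plan is to deduce both implications from the identification $\cenv(\A_u) = \O_{X_u}$ (Theorem \ref{Cenv}) together with the Dritschel--McCullough characterisation of maximal dilations recalled just before the statement.

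For the implication (1) $\Rightarrow$ (2), suppose $\sigma$ admits a dilation $\pi$ which is a $*$-representation of $\Fu$. By definition, $\pi$ is a row isometric, defect free representation satisfying the unitary commutation relations, so it extends to a $*$-representation, still denoted $\pi$, of the universal C*-algebra $\O_{X_u}$. Theorem \ref{Cenv} identifies $\O_{X_u}$ with $\cenv(\A_u)$, and the canonical imbedding $\A_u \hookrightarrow \cenv(\A_u)$ is a complete isometry. Therefore the restriction $\pi|_{\A_u}$ is completely contractive. Since $\sigma = J^* \pi(\cdot) J$ for some isometry $J$ whose range is semi-invariant, $\sigma$ is the compression of a completely contractive map and hence is itself completely contractive on $\A_u$.

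For (2) $\Rightarrow$ (1), I would apply the theorem of Dritschel and McCullough: any completely contractive unital representation of an operator algebra has a maximal dilation, and maximal dilations coincide with those enjoying the unique extension property. Hence the completely contractive representation $\sigma$ of $\A_u$ has a maximal dilation $\pi$ whose unique completely positive extension to $\cenv(\A_u) = \O_{X_u}$ is a $*$-representation $\widetilde\pi$. The restriction of $\widetilde\pi$ to the generators $\{L_{e_i}, L_{f_j}\}$ produces a row isometric, defect free family satisfying the unitary commutation relations — that is, a $*$-representation of $\Fu$ that dilates $\sigma$.

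The only subtlety I anticipate is the very last sentence of the theorem, which says ``in particular'' for $\bF_\theta^+$. Since $\A_\theta = \A_u$ when $u$ is the permutation matrix determined by $\theta$, this is immediate from the general statement. No separate argument is needed. The main ``work'' has already been done: identifying the C*-envelope as $\O_{X_u}$ (Theorem \ref{Cenv}), and invoking the general framework of maximal dilations. I do not anticipate a substantive obstacle — the proof should amount to assembling these two ingredients in the two directions described.
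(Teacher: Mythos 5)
Your proposal is correct and follows essentially the same route as the paper's own proof: both directions rest on the identification $\cenv(\A_u) = \O_{X_u}$ from Theorem \ref{Cenv}, with (1) $\Rightarrow$ (2) via compression of a $*$-representation of $\O_{X_u}$ and (2) $\Rightarrow$ (1) via the Dritschel--McCullough maximal dilation and its unique extension property. The ``in particular'' clause is handled exactly as you say, by specialising $u$ to the permutation matrix of $\theta$.
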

%%%%%%%%%%%%%%%%%%%%%%%%%%%

\begin{proof}
Suppose that $\sigma$ dilates to a $*$-dilation $\pi$. By the
definition of $\O_{X_u}$, $\pi$ extends to a $*$-representation of
$\O_{X_u}$. By Theorem~\ref{Cenv}, $\A_u$ sits inside $\O_{X_u}$
completely isometrically. As $*$-representations are completely
contractive, it follows that $\pi$ restricts to a completely
contractive representation of $\A_u$. By compression to the
original space, we see that $\sigma$ is also completely
contractive on $\A_u$.

Conversely, any completely contractive representation $\sigma$ of
$\A_u$ has a maximal dilation $\pi$. Thus it has the unique
extension property, and so extends to a $*$-representation of
$\cenv(\A_u)$. By Theorem~\ref{Cenv}, $\cenv(\A_u) = \O_{X_u}$.
Therefore $\pi$ restricts to a $*$-representation of $\bF^+_u$.
\end{proof}

%%%%%%%%%%%%%%%%%%%%%%%%%%%%%%%%%%%%%%%%%
\subsection{Ideals of the C*-algebra $\ca(\A_u)$}
We shall show that $\O_{X_u}$ is a quotient of $\ca(\A_u)$.
Indeed, there are several ideals that are evident:
\begin{align*}
 \K &:= \big\lip\big( I - \sum_i \lambda(e_i) \lambda(e_i)^* \big)
 \big( I - \sum_j  \lambda(f_j) \lambda(f_j)^* \big)\big\rip\\
 \I &:= \big\lip\big( I - \sum_i \lambda(e_i) \lambda(e_i)^*\big)\big\rip\\
 \J &:= \big\lip\big( I - \sum_j  \lambda(f_j) \lambda(f_j)^*\big)\big\rip\\
 \I+\J &= \big\lip\big( I - \sum_i \lambda(e_i) \lambda(e_i)^* \big),
 \big( I - \sum_j  \lambda(f_j) \lambda(f_j)^*\big)\big\rip .
\end{align*}

Note that the projections $$P =(I - \sum_i \lambda(e_i)
\lambda(e_i)^*) \qand  Q =(I - \sum_j \lambda(f_j)
\lambda(f_j)^*)$$ are the projections onto the subspaces
\[
 \sum_{l=0}^\infty \bC\otimes F^{\otimes l} \qand
 \sum_{k=0}^\infty E^{\otimes k}\otimes \bC
\]
and that $PQ=QP$ is the rank one
 projection $\xi_\mt \xi_\mt^*$.
Note that $\lambda(e_uf_v)) \xi_\mt \xi_\mt^* \lambda(e_sf_t)^*$
is the rank one operator $\xi_{e_uf_v}\xi_{e_sf_t}^*$ mapping
basis element $\xi_{e_sf_t}$ to basis element $\xi_{e_uf_v}$. Thus
a complete set of matrix units for $\L(\H_u)$ is available in
$\K$, and so $\K = \fK$, the ideal of compact operators.

The projection $P$ generates a copy of $\fK$ in
$\ca(\{e_i\}) \simeq \E_m$, where the matrix
units permute the subspaces
\[
 \xi_{e_u}\otimes (\sum_{l=0}^\infty \bC\otimes F^{\otimes l})
 =  \spn\{ \xi_{e_u f_v}: f_v \in \Fn \}.
\]
Also it is clear that $P\A_u P$ is a copy of $\A_n$, the
noncommutative disk algebra generated by $f_1,\dots,f_n$ and so it
generates a copy of the Cuntz--Toeplitz algebra $\E_n$ acting on
$P\H_u$. It is now easy to see that $\I$ is $*$-isomorphic to $\fK
\otimes \E_n$.

Similarly, $\J$ is isomorphic to $\E_m \otimes \fK$.
The intersection of these two ideals is $\I \cap \J = \K$;
and $\K$ is isomorphic to $\fK \otimes \fK$ sitting inside
both $\I$ and $\J$.
Then $\I+\J$ is also an ideal by elementary C*-algebra theory.

%%%%%%%%%%%%%%%%%%%%%%%%%%%
\begin{lem}\label{I+Jquotient}
The quotient $\ca(\A_u)/(\I\!+\!\J)$ is isomorphic to $\O_{X_u}$.
\end{lem}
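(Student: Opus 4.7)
The plan is to construct a surjective $*$-homomorphism $\phi: \O_{X_u} \twoheadrightarrow \ca(\A_u)/(\I+\J)$ from the universal property of $\O_{X_u}$, and then establish injectivity by factoring a tail representation through the quotient and invoking Theorem~\ref{Cenv}.

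Write $\B := \ca(\A_u)/(\I+\J)$, and let $E_i, F_j$ denote the images of $\lambda(e_i), \lambda(f_j)$ in $\B$. The left regular representation $\lambda$ is row isometric and satisfies the unitary commutation relations, so the $E_i$ and $F_j$ inherit these properties in $\B$. Moreover, because the defect generators $I - \sum_i \lambda(e_i)\lambda(e_i)^*$ and $I - \sum_j \lambda(f_j)\lambda(f_j)^*$ lie in $\I+\J$, we obtain $\sum_i E_i E_i^* = I = \sum_j F_j F_j^*$ in $\B$. Thus $(E_i, F_j)$ is a defect-free, row-isometric representation of $\bF_u^+$ satisfying the unitary commutation relations, and the universal property of $\O_{X_u}$ yields a surjective $*$-homomorphism $\phi: \O_{X_u} \twoheadrightarrow \B$ sending the canonical generators to $E_i$ and $F_j$.

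For injectivity, fix any infinite tail $\tau$. By Example~\ref{tail_u}, the tail representation $\lambda_\tau$ is a defect-free row-isometric $*$-representation of $\bF_u^+$, so it extends to a surjective $*$-homomorphism $\pi_\tau: \ca(\A_u) \twoheadrightarrow \ca(\lambda_\tau(\A_u))$. Since $\lambda_\tau$ is defect free, $\pi_\tau$ annihilates the generators of both $\I$ and $\J$, and therefore factors through $\B$, giving $\psi: \B \twoheadrightarrow \ca(\lambda_\tau(\A_u))$ with $\psi(E_i) = \lambda_\tau(e_i)$ and $\psi(F_j) = \lambda_\tau(f_j)$. The composition $\psi \circ \phi : \O_{X_u} \to \ca(\lambda_\tau(\A_u))$ then agrees on generators with the canonical quotient map of Corollary~\ref{C:quotient}, so it coincides with that map. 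By Theorem~\ref{Cenv}, this canonical map is faithful, hence $\psi \circ \phi$ is injective. Therefore $\phi$ is injective, and combined with surjectivity we conclude that $\phi$ is a $*$-isomorphism.

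The only point requiring care is the factorisation of $\pi_\tau$ through $\B$; this is immediate once one observes that $\lambda_\tau$ being defect free means precisely that $\pi_\tau$ sends each generator of $\I$ and of $\J$ to $0$. Everything else reduces to universality of $\O_{X_u}$ and the faithfulness statement already built into Theorem~\ref{Cenv}.
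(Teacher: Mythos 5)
Your overall architecture --- a surjection $\phi:\O_{X_u}\twoheadrightarrow \ca(\A_u)/(\I+\J)$ from universality, followed by an injectivity argument --- matches the paper's first step, and the surjection itself is correctly justified. The gap is in the injectivity half: you assert that $\lambda_\tau$ ``extends to a surjective $*$-homomorphism $\pi_\tau:\ca(\A_u)\twoheadrightarrow \ca(\lambda_\tau(\A_u))$'' because it is a $*$-representation of $\bF_u^+$. That inference is not available. The algebra $\ca(\A_u)$ is the \emph{concretely generated} C*-algebra of the Fock representation; nothing in the paper endows it with a universal property, so a semigroup representation by isometries does not automatically induce a $*$-homomorphism out of it. Being a defect-free row-isometric representation gives a map out of the \emph{universal} algebra $\O_{X_u}$ --- that is exactly Corollary~\ref{C:quotient} --- not out of $\ca(\A_u)$. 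To get $\pi_\tau$ one would need every $*$-polynomial norm inequality satisfied by $\lambda$ on Fock space to pass to $\lambda_\tau$; this is a faithfulness statement for the Toeplitz-type algebra of the product system which, combined with your $\phi$, already implies the lemma, so assuming it is essentially circular. Note that Lemma~\ref{cisom} only gives complete isometry on the nonselfadjoint algebra $\A_u$: the subspaces $\H_s$ in the inductive limit are invariant but not reducing, so compression to them is not multiplicative on $*$-polynomials and does not produce the required $*$-homomorphism.

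The paper closes the injectivity step differently, and in a way that avoids any map out of $\ca(\A_u)$ other than the quotient map itself: the ideals $\I$, $\J$, $\K$ are invariant under the gauge automorphisms, so the gauge action descends to the quotient and $\phi$ intertwines the gauge actions; since $\phi$ is unital, it is nonzero on the UHF core $\fF_{X_u}$, and the gauge-invariant uniqueness lemma (the unnumbered lemma preceding Theorem~\ref{Cenv}) then shows that $\phi$ is faithful. If you wish to keep your route through the tail representation, you must first construct $\psi$ honestly --- for instance by proving such a uniqueness theorem for $\ca(\A_u)$ --- at which point the gauge-automorphism argument applied directly to the quotient is shorter.
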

%%%%%%%%%%%%%%%%%%%%%%%%%%%

\begin{proof}
The quotient $\ca(\A_u)/(\I\!+\!\J)$ yields a representation of
$\bF^+_u$ as isometries.  It is defect free by construction, and
thus $\ca(\A_u)/(\I\!+\!\J)$ is a quotient of $\O_{X_u}$. It is
easy to see that the gauge automorphisms leave $\I$, $\J$ and $\K$
invariant; and so $\ca(\A_u)/(\I\!+\!\J)$ has a compatible family
of gauge automorphisms. Thus the quotient is again isomorphic to
$\ca(\Fu)$ as in the proof of Theorem~\ref{Cenv}. In particular,
this quotient is completely isometric on $\A_u$.
\end{proof}

%%%%%%%%%%%%%%%%%%%%%%%%%%%
\begin{lem}\label{I+Jideals}
The only proper ideals of $\I + \J$ are $\I$, $\J$ and $\K$.
\end{lem}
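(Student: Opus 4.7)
The plan is to analyze ideals of $\I+\J$ in two steps. First, I would classify those ideals containing $\K$ by passing to the quotient $(\I+\J)/\K$, which I claim decomposes as a direct sum of two simple C*-algebras. Second, I would show that every nonzero closed ideal of $\I+\J$ automatically contains $\K$.

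For the first step, I would construct the $\ast$-homomorphism
\[
 \phi: \I+\J \to (\I/\K) \oplus (\J/\K), \qquad a+b \mapsto (a+\K,\,b+\K),
\]
for $a\in\I$, $b\in\J$. Well-definedness follows from $\I\cap\J=\K$, since any two decompositions of the same element must then differ by an element of $\K$ in each coordinate. Multiplicativity uses in an essential way that $\I$ and $\J$ are \emph{two-sided} ideals of $\ca(\A_u)$: the cross terms $ab'$ and $ba'$ appearing in $(a+b)(a'+b')$ lie in $\I\cap\J=\K$ and so vanish in the quotients. The map is evidently surjective with kernel $\K$, so $(\I+\J)/\K \cong (\I/\K) \oplus (\J/\K)$. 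Using the identifications recorded just before the lemma, $\I/\K \cong \fK\otimes(\E_n/\fK)\cong \fK\otimes\O_n$ and symmetrically $\J/\K \cong \O_m\otimes\fK$, both simple since $\fK$, $\O_m$, $\O_n$ are simple and nuclear. Hence the closed-ideal lattice of $(\I+\J)/\K$ is $\{0,\,\I/\K,\,\J/\K,\,(\I+\J)/\K\}$, pulling back to the four ideals $\K$, $\I$, $\J$, $\I+\J$ of $\I+\J$ that contain $\K$.

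For the second step, let $L$ be a nonzero closed ideal of $\I+\J$. Since $\K$ is itself a two-sided ideal of $\ca(\A_u)$, $L\cap\K$ is an ideal of $\K=\fK(\H_u)$, which is simple; hence $L\cap\K\in\{0,\K\}$. If $L\cap\K=0$, then $L\cdot\K\subseteq L\cap\K=0$, but for any nonzero $\ell\in L\subseteq \B(\H_u)$ there is a vector $\xi\in\H_u$ with $\ell\xi\neq 0$, and then the rank-one compact $\xi\xi^*\in\K$ satisfies $\ell(\xi\xi^*)=(\ell\xi)\xi^*\neq 0$, a contradiction. Therefore $\K\subseteq L$, and the first step shows $L\in\{\K,\I,\J,\I+\J\}$, yielding the stated list of proper ideals.

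The main delicate point is the verification that $\phi$ is a well-defined $\ast$-homomorphism with kernel $\K$; this depends on $\I$ and $\J$ being two-sided ideals of the larger algebra $\ca(\A_u)$ so that $\I\J\subseteq\K$ and $\J\I\subseteq\K$. Once this structural identity is in hand, the remaining steps (simplicity of the tensor quotients and the compact-operator argument) are essentially formal.
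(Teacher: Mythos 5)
Your proof is correct and takes essentially the same route as the paper: pass to the quotient $(\I+\J)/\K \cong \I/\K \oplus \J/\K$, a direct sum of two simple C*-algebras, and use that $\K=\fK$ is the unique minimal ideal of a concrete C*-algebra containing the compacts. The paper cites both facts as standard, whereas you verify them explicitly (the homomorphism $\phi$ using $\I\J\subseteq\I\cap\J=\K$, and the rank-one operator argument); this adds detail but not a new idea.
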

%%%%%%%%%%%%%%%%%%%%%%%%%%%

\begin{proof}
It is a standard result that if a C*-algebra of operators
acting on a Hilbert space contains $\fK$,
then $\fK$ is the unique minimal ideal. So $\K$ is the
unique minimal ideal of $\ca(\A_u)$.

Suppose that $\M$ is an ideal of $\I+\J$ properly containing $\K$.
Then $\M/\K$ is an ideal of
\[ (\I\!+\!\J)/\K \simeq \O_m \otimes\fK \oplus \fK \otimes \O_n .\]
The two ideals $\I/\K \simeq \O_m \otimes\fK$ and
$\J/\K \simeq \fK \otimes \O_n$ are mutually orthogonal and
simple.
So the ideal $\M/\K$ either contains one or the other or both.
\end{proof}

Kumjian and Pask define a notion called the \textit{aperiodicity
condition} for higher rank graphs.
In our context, for the algebra $\A_\theta$ it means
that there is an \textit{irreducible} representation of type 3a.
They show \cite[Proposition~4.8]{KumPask} that aperiodicity
implies the simplicity of $\ca(\Fth)$.
The converse is established by Robertson and Sims \cite{RobSims}.
In \cite{DYperiod}, this is examined carefully.
Aperiodicity seems to be typical, but
there are periodic 2-graphs such as the flip algebra of
Example~\ref{E:flip}.

When $\ca(\Fth)$ is simple, we have described the complete ideal
structure of $\ca(\Ath)$. For the general case, see
\cite{DYperiod}.

%%%%%%%%%%%%%%%%%%%%%%%%%%%%%%%%%%%%%%%%%%%
\section{Row Contractive Dilations}
%%%%%%%%%%%%%%%%%%%%%%%%%%%%%%%%%%%%%%%%%%%

Now we turn to dilation theory. We saw in Theorem~\ref{DilnAth}
that maximal completely contractive representations of $\A_u$
correspond to the $*$-representations of $\Fu$. In the next
section, we will show that defect free contractive representations
of $\Fu$ are completely contractive, and therefore dilate to
$*$-representations. Here we consider row contractive
representations and give a simple proof of Solel's result that
they dilate to row isometric representations. Despite such
favourable dilation we give examples of contractive
representations that contrast significantly with the defect free
case.  In particular we show that contractive representations of
$\Fth$ need not be completely contractive.

%%%%%%%%%%%%%%%%%%%%%%%%%%%
\begin{eg}\label{flipdefect}
Consider the flip graph of Examples~\ref{E:flip} and \ref{flipCenv}.
Define the representation of $\Ath$
on a basis $\xi_0,\xi_1,\xi_2,\zeta_1,\zeta_2$ given by
\[
\pi(e_i) = \zeta_i \xi_1^* \quad \pi(f_1) = \zeta_1 \xi_0^* \qand
\pi(f_2) = \zeta_2 \xi_2^*.
\]
Note that $\pi$ is row contractive.

\[
 \xymatrix@!@C=.1in{
  {\xy *++={\xi_0} *\frm{o} \endxy} \ar@{=>}[dr]_-1
&& {\xy *++={\xi_1} *\frm{o} \endxy} \ar[dl]^-1 \ar[dr]_-2 && {\xy
*++={\xi_2} *\frm{o} \endxy} \ar@{=>}[dl]^-2
\\ & {\xy *++={\zeta_1} *\frm{o} \endxy}
&& {\xy *++={\zeta_2} *\frm{o} \endxy} }
\]

However \textit{$\pi$ does not dilate to a defect free isometric
representation.} To see this, suppose that $\pi$ has a dilation
$\sigma$ that is isometric and defect free. The path from $\xi_0$
to $\xi_2$ is given by $\pi(f_2^* e_2 e_1^* f_1)$. However in any
defect free dilation,
\begin{align*}
 \sigma( f_2^* e_2 e_1^* f_1) &= \sigma( f_2^* e_2 e_1^* f_1)\sigma (e_1e_1^* + e_2 e_2^*) \\
                     &= \sigma( f_2^* e_2 e_1^* (e_1 f_1 e_1^* + e_1 f_2 e_2^*)) \\
                     &=  \sigma( f_2^* e_2 (f_1 e_1^* + f_2 e_2^*)) \\
                     &=  \sigma( f_2^* f_2 (e_1 e_1^* + e_2 e_2^*)) \\
                     &=  \sigma( 1 ) = I.
\end{align*}
Hence $\xi_2 = \sigma( f_2^* e_2 e_1^* f_1) \xi_0 = \xi_0$,
contrary to fact.

Next we show that \textit{$\pi$ is contractive on $\Ath$.}
We need to show that $\| \pi(x) \| \le \| \lambda(x) \|$
for $x \in \Ath$. Let
\[ x = a + b_1 e_1 + b_2 e_2 + c_1 f_1 + c_2 f_2 + \text{higher order terms} .\]
Then
\[
 \pi(x) =  \left[\begin{array}{ccc|cc}
 a&0&0&0&0\\
 0&a&0&0&0\\
 0&0&a&0&0\\
 \hline
 c_1&b_1&0&a&0\\
 0&b_2&c_2&0&a
\end{array}\right]
= \begin{bmatrix} aI_3&0\\X&aI_2\end{bmatrix} .
\]
Now the $5\times 5$ corner of $ \lambda(x)$ on
$\spn\{\xi_\mt, \xi_{e_1},\xi_{e_2},\xi_{f_1},\xi_{f_2}\}$ has the form
\[
 \left[\begin{array}{c|cccc}
 a&0&0&0&0\\
 \hline
 b_1&a&0&0&0\\
 b_2&0&a&0&0\\
 c_1&0&0&a&0\\
 c_2&0&0&0&a
\end{array}\right]
= \begin{bmatrix} a&0\\ y&aI_4\end{bmatrix}
\]
Note that $\|X\| \le \|X\|_2 = \|y \|_2$. So
\[
 \|\pi(x)\| \le \left\| \begin{bmatrix} |a|&0 \\ \|X\|&|a|\end{bmatrix}\right\| \le
  \left\| \begin{bmatrix} |a|&0 \\ \|y\| &|a|\end{bmatrix}\right\|  \le \|\lambda(x)\|.
\]

Nevertheless, we show that \textit{$\pi$ is not completely contractive.}
Let $B_1=B_2= \begin{bmatrix}1&0\end{bmatrix}$
and $C_1=-C_2 = \begin{bmatrix}0&1\end{bmatrix}$;
and consider the matrix polynomial $X = B_1e_1+B_2e_2+C_1f_1+C_2f_2$.
Then
\[
 \| \pi(X)\| = \norm{\left[ \begin{array}{cc|cc|cr}0&1&1&0&0&0\\
 \hline 0&0&1&0&0&-1\end{array} \right]} =
 \norm{\begin{bmatrix}1&1&0\\0&1&1\end{bmatrix}} = \sqrt3.
\]
By Example~\ref{flipCenv}, the C*-envelope of $\Ath$ is $\O_2\otimes\rC(\bT)$.
As shown there, an irreducible representation $\sigma$ is determined by its
restriction to $\ca(e_1,e_2)$ and a scalar $t \in \bT$
so that $\sigma(f_i) = t\sigma(e_i)$.
Since $\ca(e_1,e_2) \simeq\O_2$ is simple, it does not matter which
representation is used, as all are faithful.
Let $S_i = \sigma(e_i)$ be Cuntz isometries.
Then the norm $\lambda(X)$ is determined as the supremum over
$t\in\bT$ of these representations.
\begin{align*}
 \| \lambda(X)\| &=
 \sup_{t \in \bT}  \| (B_1+tC_1) \otimes S_1 + (B_2+tC_2)\otimes S_2 \| \\
 &= \sup_{t \in \bT} \norm{\begin{bmatrix}B_1+tC_1\\B_2+tC_2\end{bmatrix}}
 = \sup_{t \in \bT} \norm{\left[ \begin{array}{cr}1&t\\1&-t\end{array} \right]} =
 \sqrt2 .
\end{align*}

An alternative proof is obtained by noting that by Theorem~\ref{DilnAth},
if $\pi$ were completely contractive on $\Ath$, then one could dilate it to a
$*$-representation of $\Fth$, which was already shown to be impossible.
\end{eg}

A related example shows that a row contractive representation
may not even be contractive.

\begin{eg}\label{not contractive}
Take any $\Fth$ for which there are indices $i_0$ and $j_0$ so
that there is no solution to $e_{i_0} f_j = f_{j_0} e_i$. The flip
graph is such an example, with $i_0=1$ and $j_0=2$. Consider the
two dimensional representation $\pi$ of $\Fth$ on $\bC^2$ with
basis $\{\xi_1,\xi_2\}$ given by
\[
 \pi(e_{i_0}) = \pi(f_{j_0}) = \xi_2\xi_1^*
 \qand \pi(e_i)=\pi(f_j) = 0 \text{  otherwise.}
\]
The product $\pi(e_if_j) = 0$ for all $i,j$; so this is a representation.
Evidently it is row contractive.

\[
 \xymatrix{
  {\xy *++={\xi_1} *\frm{o} \endxy}
 \ar@/^{-1pc}/[d]_-{i_0} \ar@/^{1pc}/ @{=>}[d]^-{j_0}\\
  {\xy *++={\xi_2} *\frm{o} \endxy}
}
\]

However $\pi(e_{i_0}+f_{j_0}) = 2 \xi_2\xi_1^*$ has norm $2$.
The hypothesis guarantees that no word beginning with $e_{i_0}$
coincides with any word beginning with $f_{j_0}$.
Thus in the left regular representation, $\lambda(e_{i_0})$ and
$\lambda(f_{j_0})$ are isometries with orthogonal ranges.
Hence $\|\lambda(e_{i_0}+f_{j_0})\| = \sqrt2$.

So this row contractive representation does not extend
to a contractive representation of $\Ath$.
\end{eg}

Another problem with dilating row contractive representations is
that the minimal row isometric dilation need  not be unique.
Consider the following illustrations.

%%%%%%%%%%%%%%%%%%%%%%%%%%%
\begin{eg}
Let $\pi$ be the 2-dimensional trivial representation of $\Fth$,
$\pi(\mt) = I_2$ and $\pi(w) = 0$ for $w\ne\mt$.
Evidently this dilates to the row isometric representation
$\lambda\oplus\lambda$; and this is clearly minimal.

Now pick any $i,j$ and factor $e_if_j = f_{j'}e_{i'}$.
Inside of the left regular representation, identify
$\bC^2$ with $\M_0 := \spn\{\xi_{e_{i'}}, \xi_{f_j} \}$.
Note that the compression of $\lambda$ to $\M_0$
is unitarily equivalent to $\pi$.
The invariant subspace that $\M_0$ determines
is $\M = \ol{ \Ath \xi_{e_{i'}} + \Ath \xi_{f_j} }$.
The restriction $\sigma$ of $\lambda$ to $\M$ is therefore
a minimal row isometric dilation of $\pi$.
However
\[ \sigma(e_i) \xi_{f_j} = \xi_{e_if_j} = \xi_{f_{j'}e_{i'}} = \sigma(f_{j'})\xi_{e_{i'}} .\]
For any non-zero vector $\zeta = a\xi_{e_{i'}} + b\xi_{f_j}$ in $\M_0$,
either $\sigma(e_i)^*\sigma(f_{j'}) \zeta = a  \xi_{f_j}$ or $\zeta$ itself
is a non-zero multiple of $ \xi_{f_j}$; and similarly $\xi_{e_{i'}}$
belongs to the reducing subspace containing $\zeta$.
Therefore $\sigma$ is irreducible.

\[
 \xymatrix@!@C=.1in{
  {\xy *++={\xi_{e_i'}} *\frm{o} \endxy}
 \ar@{=>}[dr]_-{j'} &&
  {\xy *++={\xi_{f_j}} *\frm{o} \endxy}
 \ar[dl]^-{i}\\
  &{\xy *++={\xi_{e_if_j}} *\frm{o} \endxy}
}
\]

So these two minimal row isometric dilations are not unitarily equivalent.
\end{eg}

%%%%%%%%%%%%%%%%%%%%%%%%%%%
\begin{eg}
Here is another example where the original representation is irreducible.
Consider $\Fth$ where $m=2$, $n=3$ and the permutation $\theta$
has cycles
\[ \big( (1,2) , (2,1) \big) \qand \big( (2,2),(2,3) , (1,3) \big) .\]
Let $\pi$ be the representation on $\bC^3$ with basis
$\zeta_1, \zeta_2, \zeta_3$ given by
\[ \pi(e_1) = \zeta_3 \zeta_1^* \qand \pi(f_1) = \zeta_3 \zeta_2^* \]
and all other generators are sent to $0$.
We show that this may be dilated to a subrepresentation of $\lambda$
in two different ways.

First identify $\zeta_1$ with $\xi_{f_1}$, $\zeta_2$ with
$\xi_{e_1}$ and $\zeta_3$ with $\xi_{e_1f_1} = \xi_{f_1e_1}$. Then
a minimal row isometric dilation is obtained by $\sigma_1 =
\lambda|_{\M_1}$ where $\M_1 = \ol{ \Ath \xi_{e_1} + \Ath
\xi_{f_1} }$. A second dilation is obtained from the
identification of $\zeta_1$ with $\xi_{f_2}$, $\zeta_2$ with
$\xi_{e_2}$ and $\zeta_3$ with $\xi_{e_1f_2} = \xi_{f_1e_2}$. Then
$\sigma_2 = \lambda|_{\M_2}$ where $\M_2 = \ol{ \Ath \xi_{e_2} +
\Ath \xi_{f_2} }$.

These two dilations are different because
\[ \sigma_1(e_2) \xi_{f_1} = \xi_{e_2f_1} = \xi_{f_2e_1} = \sigma_1(f_2) \xi_{e_1} \]
while
\[ \sigma_2(e_2) \xi_{f_2} = \xi_{e_2f_2} \ne \xi_{f_2e_2} = \sigma_2(f_2) \xi_{e_2}. \]
So the two dilations are not equivalent.

\[
\xymatrix@!@R=.15in@C=.08in{
&&&    {\xy *++={\xi_{f_1}} *\frm{o} \endxy} 
  \ar@{=>}[dlll]_(.7)1 \ar@{=>}[dll]_(.7)2 \ar@{=>}[dl]_(.7)3 
  \ar[d]^(.4)1 \ar[drr]^(.7)2&& 
  {\xy *++={\xi_{e_1}} *\frm{o} \endxy} 
  \ar@{=>}[dll]_(.4)1 \ar@{=>}[d]^-2 
\ar@{=>}[dr]^(.7)3 \ar[drr]^(.7)2 \ar[drrr]^(.7)1  
\\ \vdots&\vdots&\vdots&
  {\xy *++={\xi_{e_1f_1}} *\frm{o} \endxy} 
&& \bullet
%{\xy *++={\xi_{e_2f_1}} *\frm{o} \endxy}
&\vdots&\vdots&\vdots
\\ \vdots&\vdots&\vdots&\vdots  
&&\vdots&\vdots&\vdots&\vdots&
}
\]

\[
\xymatrix@!@R=.15in@C=.08in{
&&&  {\xy *++={\xi_{f_2}} *\frm{o} \endxy} 
\ar@{=>}[dlll]_(.7)1 \ar@{=>}[dll]_(.7)2 \ar@{=>}[dl]_(.7)3 
  \ar[dr]^-1 \ar[d]_-2&& 
  {\xy *++={\xi_{e_2}} *\frm{o} \endxy} 
 \ar@{=>}[dr]^(.7)3 \ar[drr]^(.7)2 \ar[drrr]^(.7)1  
  \ar@{=>}[dl]_-1 \ar@{=>}[d]^-2 
\\ \vdots&\vdots&\vdots&\bullet
%{\xy *++={\xi_{e_2f_2}} *\frm{o} \endxy} 
&  {\xy *++={\xi_{e_1f_1}} *\frm{o} \endxy}
& \bullet
 %{\xy *++={\xi_{f_2e_2}} *\frm{o} \endxy} 
&\vdots&\vdots&\vdots
\\ \vdots&\vdots&\vdots&\vdots  
&\vdots&\vdots&\vdots&\vdots&\vdots&
}
\]
\end{eg}

%%%%%%%%%%%%%%%%%%%%%%%%%%%
With these examples as a caveat, we provide a simple proof of
Solel's result  \cite[Corollary~4.5]{SolCP}. Our proof is based on
the much more elementary result of Frahzo \cite{Fra}, Bunce
\cite{Bun} and Popescu \cite{Pop} that every contractive $n$-tuple
has a unique minimal dilation to a row isometry.

First we recall some details of Bunce's proof. Consider a row
contraction $A =
\begin{bmatrix}A_1 & \dots &A_m
\end{bmatrix}$. Following Schaeffer's proof of Sz.\ Nagy's
isometric dilation theorem, let $D_A = (I_{\bC^m \otimes \H} -
A^*A)^{1/2}$. Observe that $\begin{bmatrix}A\\D_A \end{bmatrix}$
is an isometry. Hence the columns $\begin{bmatrix}A_i\\D_A^{(i)}
\end{bmatrix}$ are isometries with pairwise orthogonal ranges in
$\B(\H, \H\oplus \V\otimes\H)$ where $\V = \bC^m$. Now consider
$\K = \V\otimes\H\otimes\Fockm$ where we identify $\V\otimes\H$
with $\V\otimes\H\otimes\bC\xi_\mt$ inside $\K$. Let $\lambda$ be
the left regular representation of $\Fm$ on $\Fockm$, and set $L_i
= \lambda(e_i)$. Define isometries on $\H \oplus\K$ by
$S_i = \begin{bmatrix}A_i & 0\\
\begin{bmatrix} D_A^{(i)}\\0 \end{bmatrix} &
I_{\V\otimes\H} \otimes L_i \end{bmatrix}$. These isometries have
the desired properties except minimality. One  can then restrict
to the invariant subspace $\M$ generated by $\H$. Popescu
establishes the uniqueness of this minimal dilation in much the
same way as for the classical case.

%%%%%%%%%%%%%%%%%%%%%%%%%%%
\begin{lem}\label{L:row_diln}
Let $S= \begin{bmatrix}S_1&\dots&S_m\end{bmatrix}$
be a row isometry, where each $S_i \in \B(\H \oplus \K)$
is an isometry that leaves $\K$ invariant.
Suppose that there is a Hilbert space $\W$ so that
$\K \simeq \W \otimes \Fockm$ and $S_i|_\K \simeq I_\W \otimes L_i$
for $1 \le i \le m$.
Let $\M$ be the smallest invariant subspace for $\{S_i\}$ containing $\H$.
Then $\M$ reduces $\{S_i\}$ and there is a subspace
$\W_0 \subset\W$ so that $\M^\perp \simeq \W_0 \otimes \Fockm$.
\end{lem}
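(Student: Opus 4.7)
The plan is to identify $\M' := \M \cap \K$ explicitly by exploiting the block structure of the $S_i$ with respect to the decomposition $\H \oplus \K$. First I would note that $\M = \H \oplus \M'$: since $\H \subseteq \M$ and each $S_i$ leaves $\K$ invariant, each generator $S_w\xi$ of $\M$ (with $\xi \in \H$) splits as a sum of an $\H$-part and a $\K$-part, so the closed linear span of all such generators does too. Consequently $\M^\perp = \K \ominus \M'$, and everything reduces to identifying $\M'$ inside $\K = \W \otimes \Fockm$.

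Write $S_i = \begin{bmatrix} A_i & 0 \\ B_i & I_\W \otimes L_i \end{bmatrix}$ with $A_i : \H \to \H$ and $B_i : \H \to \W \otimes \Fockm$. The condition $S_i^*S_i = I$ forces $(I_\W \otimes L_i^*)B_i = 0$, so $B_i\H \subseteq \ker(I_\W \otimes L_i^*) = \W \otimes \bC\xi_\mt$; thus $B_i$ has the form $C_i \otimes \xi_\mt$ for some bounded $C_i : \H \to \W$. Set $\W_1 := \ol{\spn}\bigcup_{i=1}^m C_i\H$ and $\W_0 := \W \ominus \W_1$.

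The heart of the argument is the claim $\M' = \W_1 \otimes \Fockm$. The inclusion ``$\supseteq$'' is easy: $\M'$ contains each $P_\K S_i\xi = (C_i\xi)\otimes\xi_\mt$, hence contains $\W_1 \otimes \bC\xi_\mt$, and since $\M'$ is invariant under $I_\W \otimes L_i = S_i|_\K$, it absorbs every $\W_1 \otimes \bC\xi_w$ and therefore all of $\W_1 \otimes \Fockm$. For the reverse inclusion I would induct on $|w|$, using the recursion
\[
 P_\K S_{e_iw'}\xi = (C_iA_{w'}\xi)\otimes \xi_\mt + (I_\W \otimes L_i)\,P_\K S_{w'}\xi,
\]
which keeps us inside $\W_1 \otimes \Fockm$ at each step (the first summand lies in $\W_1 \otimes \bC\xi_\mt$ and the second lies in $\W_1 \otimes \Fockm$ by the inductive hypothesis).

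Once $\M' = \W_1 \otimes \Fockm$ is established, we have $\M^\perp = \W_0 \otimes \Fockm$ as desired, and reducibility is automatic: $\M^\perp \subseteq \K$, where $S_i$ acts as $I_\W \otimes L_i$, which sends $\W_0 \otimes \Fockm$ into itself. Combined with the given $S_i$-invariance of $\M$, this yields $\M$ reducing. I do not expect a serious obstacle: the block computation giving $B_i = C_i \otimes \xi_\mt$ and the inductive identification of $\M'$ are routine once one recognises that $\W \otimes \bC\xi_\mt$ is the wandering subspace for the pure row isometry $I_\W \otimes [L_1,\dots,L_m]$.
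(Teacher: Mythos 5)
Your route is genuinely different from the paper's and, with one repair, it works. The paper first shows directly that $\M$ is coinvariant: for a nontrivial word $w=iw'$ one has $S_j^*S_w\H = \delta_{ij}S_{w'}\H$, and $S_j^*\H\subseteq\H$ because $\K$ is invariant; hence $\M$ reduces, $\M^\perp\subseteq\K$, and since $W^*(\{I_\W\otimes L_i\}) = \bC I_\W\otimes\B(\Fockm)$ (as $\ca(L_1,\dots,L_m)$ contains the compacts) every reducing subspace of $\K$ has the form $\W_0\otimes\Fockm$. You instead identify $\M\cap\K$ concretely as $\W_1\otimes\Fockm$ and read off both conclusions from that. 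This is more computational but buys an explicit description of $\W_0$ in terms of the defect operators $C_i$; the decomposition $\M=\H\oplus(\M\cap\K)$, the recursion for $P_\K S_{e_iw'}\xi$, and both inclusions of the main claim are all sound.

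The one step that fails as written is the identification $\ker(I_\W\otimes L_i^*) = \W\otimes\bC\xi_\mt$. For $m\ge 2$ this is false: $\ker L_i^* = (\ran L_i)^\perp$ is spanned by all basis vectors $\xi_w$ whose word does not begin with $i$, so it properly contains $\bC\xi_\mt$. Consequently the single relation $S_i^*S_i=I$ only yields $\ran B_i\perp\W\otimes\ran L_i$ and does not force $B_i = C_i\otimes\xi_\mt$. The repair is to use the full row-isometry hypothesis, which you have available but did not invoke: the $(2,1)$ block of $S_j^*S_i=\delta_{ij}I$ gives $(I_\W\otimes L_j^*)B_i=0$ for \emph{every} $j$, hence $\ran B_i\subseteq\W\otimes\bigcap_j\ker L_j^* = \W\otimes\bC\xi_\mt$ --- exactly the wandering subspace you allude to in your closing remark. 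With that substitution the rest of your argument goes through unchanged.
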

%%%%%%%%%%%%%%%%%%%%%%%%%%%

\begin{proof}
Clearly $\M = \bigvee_{w \in \Fm} S_w \H$.
For any non-trivial word $w = iw'$ in $\Fm$,
$S_j^* S_w\H = \delta_{ij} S_{w'}\H$;
and $S_j^* \H \subset\H$ because $\K=\H^\perp$
is invariant for $S_j$.
So $\M$ reduces each $S_j$.

Thus $\M^\perp \subset \K \simeq \W \otimes \Fockm$
reduces each $S_i|_\K \simeq I_\W \otimes L_i$.
But $W^*(L_1,\dots,L_m) = \B(\Fockm)$ because
$\ca(L_1,\dots,L_m)$ contains the compact operators.
Hence $W^*(\{S_i|_\K\}) \simeq \bC I_\W \otimes \B(\Fockm)$.
Therefore a reducing subspace is equivalent to one of
the form $\W_0 \otimes \Fockm$.
\end{proof}

%%%%%%%%%%%%%%%%%%%%%%%%%%%
\begin{thm}[Solel] \label{T:Solel}
Let $\sigma$ be a row contractive representation of $\Fu$ on $\H$.
Then $\sigma$ has a dilation to a row isometric representation
$\pi$ on a Hilbert space $\H \oplus \K$.
\end{thm}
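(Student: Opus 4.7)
The representation $\sigma$ is equivalent to a pair of row contractions $A = [\sigma(e_1), \dots, \sigma(e_m)]$ and $B = [\sigma(f_1), \dots, \sigma(f_n)]$ on $\H$ satisfying the $u$-commutation relations. My plan is to apply the Frazho--Bunce--Popescu construction in turn to $A$ and to $B$, using the $u$-commutation to ferry each row operator through the other's dilation space, and to invoke uniqueness of minimal FBP dilations together with Lemma~\ref{L:row_diln} to see that both rows remain row isometric on a suitable reducing subspace.

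First, apply FBP to $A$, producing a minimal row isometric dilation $\td A$ on $\H_1 = \H \oplus \K_1$. The Bunce construction recalled before Lemma~\ref{L:row_diln} exhibits $\K_1$ as an invariant subspace of the form $\W_1 \otimes \Fockm$ on which $\td A_i$ acts as $I_{\W_1} \otimes L_i$. I then extend $B$ to row operators $\td B_j$ on $\H_1$ by demanding the commutation $\td A_i \td B_j = \sum u_{(i,j),(i',j')}\, \td B_{j'} \td A_{i'}$. Since $\H_1 = \bigvee_w \td A_w \H$ and $\td A_{i'}^*$ preserves $\H$, rewriting this commutation in the form
\[
\td B_{j'} \td A_{i'} \xi \;=\; \sum_{i=1}^m \sum_{j=1}^n \overline{u_{(i,j),(i',j')}}\, \td A_i B_j \xi, \qquad \xi \in \H,
\]
and iterating along words in $\Fm$ determines $\td B_j$ unambiguously on the dense set $\bigvee_w \td A_w \H$. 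The unitarity of $u$ then forces $[\td B_1, \dots, \td B_n]$ to be a row contraction, since it allows one to compute the row norm of the extension in terms of the row norm of $B$ itself.

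Now apply FBP a second time, this time to $\td B$, yielding a minimal row isometric dilation $\wh B$ on $\H \oplus \K$. Define a corresponding extension $\wh A$ by the analogous commutation prescription. The critical step is checking that $\wh A$ is a row isometry, not merely a row contraction: Lemma~\ref{L:row_diln} describes the new part $\K \ominus \K_1$ as $\W_2 \otimes \Fockn$, and uniqueness of the minimal FBP dilation (applied to the composite row contraction formed from $A$ and $B$ in the two possible orderings) identifies the action of $\wh A$ on this piece with the shift action dictated by the $u$-twisted Fock-space structure. Restricting $(\wh A, \wh B)$ to the smallest common reducing subspace containing $\H$ then produces the desired row isometric $u$-commuting dilation of $\sigma$.

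The principal obstacle throughout is ensuring that the second FBP step does not destroy the row isometric property of the first row. This reduces to matching two descriptions of the same Fock-type structure --- the one coming from dilating $A$ and the one coming from dilating $\td B$. Unitarity of $u$ is what makes the defect spaces in the two orderings compatible, and uniqueness of the minimal FBP dilation, packaged through Lemma~\ref{L:row_diln}, is the tool that makes this matching rigorous.
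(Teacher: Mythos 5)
Your strategy --- dilate $A$ first, transport $B$ across by the commutation relations, then dilate the transported row and transport $A$ back --- is genuinely different from the paper's, and it has gaps at both transport steps. The first is well-definedness of $\td B_{j'}$. The prescription $\td B_{j'}\td A_{i'}\xi = \sum_{i,j}\overline{u_{(i,j),(i',j')}}\,\td A_i B_j\xi$ assigns values on the set $\{\td A_w\xi : w\in\Fm,\ \xi\in\H\}$, but this is only a spanning set, not an independent one: $\H$ is not orthogonal to $\sum_{i'}\td A_{i'}\H$ (indeed $P_\H\td A_{i'}\xi = A_{i'}\xi$), so a single vector of $\H_1$ has many representations as combinations of $\td A_w\xi$'s and you must check they all receive the same image. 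Concretely, if $\oplus_{i'}\xi_{i'}\in\ker D_A$ then $\sum_{i'}\td A_{i'}\xi_{i'}=\sum_{i'}A_{i'}\xi_{i'}$ lies in $\H$, and consistency with $\td B_{j'}|_\H=B_{j'}$ would force $\oplus_i\bigl(\sum_{i',j}\overline{u_{(i,j),(i',j')}}B_j\xi_{i'}\bigr)$ to lie in $\ker D_A$ as well, which does not follow from unitarity of $u$. The claim that $[\td B_1\ \dots\ \td B_n]$ is then a row contraction is likewise asserted rather than proved. The same issues recur, amplified, at the second step, where you must additionally show the transported $\wh A$ is a row \emph{isometry}; Lemma~\ref{L:row_diln} does not deliver this, since it only describes reducing subspaces of a single row isometry with a left-regular tail and says nothing about an operator defined by a commutation prescription on the new summand.

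The paper sidesteps the transport problem entirely: it dilates $A$ and $B$ \emph{simultaneously} to isometries $S_i$, $T_j$ on one space $\H\oplus(\V\otimes\H\otimes\H_u)$ with infinite-multiplicity left-regular tails, observes that the products $e_if_j$ generate a free semigroup $\Fmn$ inside $\bC[\Fu]$, and notes that $\pi_1(e_if_j)=S_iT_j$ and $\pi_2(e_if_j)=\sum_{i',j'} u_{(i,j),(i',j')}T_{j'}S_{i'}$ are two row isometric dilations of the \emph{same} row contractive representation of $\Fmn$. Uniqueness of the minimal Frazho--Bunce--Popescu dilation, together with Lemma~\ref{L:row_diln} and the matching infinite multiplicities of the left-regular summands, yields a unitary $W$ fixing $\H$ with $\pi_2=W\pi_1W^*$; setting $\pi(e_i)=S_iW$ and $\pi(f_j)=W^*T_j$ gives a row isometric representation satisfying the $u$-relations by construction. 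Your instinct that uniqueness of minimal dilations of the two orderings is the key is exactly right, but it must be applied to the free semigroup on the $mn$ products rather than used to repair a sequential construction.
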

%%%%%%%%%%%%%%%%%%%%%%%%%%%

\begin{proof}
Start with a Hilbert space $\W = \V \otimes \H$, where $\V$ is a
separable, infinite dimensional Hilbert space, and set $\K = \W
\otimes \H_u$. Let $\lambda$ denote the left regular
representation of $\Fu$ on $\H_u$. Note that the restriction to
$\Fm= \ip{e_1,\dots,e_m}$ yields a multiple of the left regular
representation of $\Fm$.

Following Bunce's argument, set $A_i = \sigma(e_i)$ and define
isometries on $\H \oplus\K$ by
$S_i = \begin{bmatrix}A_i & 0\\
\begin{bmatrix} D_A^{(i)}\\0 \end{bmatrix} &
I_{\V\otimes\H} \otimes \lambda(e_i) \end{bmatrix}$.
However, note that the increased size of $\V$ means that
the $m$ element column $D_A^{(i)}$ must be extended by
zeros even within the subspace $\W \otimes \bC \xi_\mt$.
Thus there is always a subspace orthogonal to the
minimal invariant subspace $\M$ containing $\H$ on which
$S_i$ acts like a multiple of the left regular representation
with multiplicity at least $\max\{ \aleph_0,\Dim\H\}$.

Similarly, set $B_j = \sigma(f_j)$ for $1 \le j \le n$, and define
the defect operator $D_B = (I_{\bC^n \otimes \H} - B^*B)^{1/2}$.
Then define isometries on $\H \oplus\K$ by
\[
 T_j = \begin{bmatrix}B_j & 0\\
\begin{bmatrix} D_B^{(j)}\\0 \end{bmatrix} &
I_{\V\otimes\H} \otimes \lambda(f_j) \end{bmatrix} .
\]

Now notice that  in $\bC[\Fu]$ the semigroup generated by $e_1f_1,
\dots, e_mf_n$ is the free semigroup $\Fmn$. Indeed, if $e_if_jw =
e_kf_lw'$, then by cancellation,
it follows that $i=k$, $j=l$ and $w=w'$.  So, with successive cancellation,  the
alternating products $e_if_jw$, $e_kf_lw'$ are equal in $\bC[\Fu]$
only if they are identical.

We will consider two row isometric representations of $\Fmn$:
\[
 \pi_1(e_if_j) = S_iT_j \qand
 \pi_2(e_if_j) = \sum_{i'=1}^m \sum_{j'=1}^n u_{(i,j),(i',j')} T_{j'} S_{i'}
\]
for all $1 \le i \le m$ and $1 \le j \le n$. The reason that
$\pi_2$ has the desired properties is that a $p$-tuple of
isometries with orthogonal ranges spans a subspace isometric to a
Hilbert space consisting of scalar multiples of isometries. So the
fact that $u$ is a unitary matrix ensures that the $mn$ operators
$\pi_2(e_if_j)$ are indeed isometries with orthogonal ranges.

Since $\sigma$ is a representation of $\Fu$, we see that $\pi_1$
and $\pi_2$ both compress to $\sigma$ on $\H$. So both are
dilations of the same row contractive representation of $\Fmn$. By
Lemma~\ref{L:row_diln}, for both $k=1,2$, we have $\pi_k(e_if_j)
\simeq \mu(e_if_j) \oplus I_{\W_k} \otimes \lambda(e_if_j)$ where
$\mu$ is the minimal row isometric dilation of $\sigma|{\Fmn}$ and
$\Dim\W_k = \max\{ \aleph_0,\Dim\H\}$. The two minimal dilations
are unitarily equivalent via a unitary which is the identity on
$\H$, and the multiples of the left regular representation are
also unitarily equivalent. So $\pi_1$ and $\pi_2$ are unitarily
equivalent on $\H\oplus\K$ via a unitary $W$ which fixes $\H$,
i.e.
\[
 \pi_2(e_if_j) = W \pi_1(e_if_j) W^*
 \qforal 1 \le i \le m \AND 1 \le j \le n .
\]

Now set
\[
 \pi(e_i) = S_i W \qand \pi(f_j) = W^* T_j
 \qfor 1 \le i \le m \AND 1 \le j \le n .
\]
This provides a row isometric dilation of $\big[e_1\ \dots\ e_m\big]$
and $\big[f_1\ \dots\ f_n\big]$.  Moreover,
\begin{align*}
 \pi(e_i) \pi(f_j) &= S_iWW^*T_j = S_iT_j =  \pi_1(e_if_j) \\
 &= W^* \pi_2(e_if_j) W =
 \sum_{i'=1}^m \sum_{j'=1}^n u_{(i,j),(i',j')} W^* T_{j'} S_{i'} W \\
 &= \sum_{i'=1}^m \sum_{j'=1}^n u_{(i,j),(i',j')} \pi(f_{j'}) \pi(e_{i'}) .
\end{align*}
So $\pi$ yields a representation of $\Fu$.
\end{proof}

We remark that the case $n=1$ of Solel's theorem was obtained
earlier by Popescu \cite{Pop2}; and the special case of this for
commutant lifting is due to Muhly and Solel \cite{MStensor}.

Our discussion of the flip algebra in Examples \ref{flipdefect} and \ref{not contractive}
show that a row contractive representation of the algebra $\A_u$
need not be contractive. As a consequence,
the natural map $ \T_+(X_u) \to \A_u$ from the tensor algebra is
not isometric.

In fact in this case, using results from \cite{PowSol},
we can show that there is no map which is an isometric isomorphism.
Firstly, note that the explicit unitary
automorphisms of $\A_u$ given there may be readily defined on the
tensor algebra. Secondly, the character space $M(\A_u)$ of $\A_u$
and its core subset (which is definable in terms of nest
representations) identify with the character space and core of
$\T_+(X_u)$. Suppose now that $\Gamma : \A_u \to \T_+(X_u)$ is a
isometric isomorphism. Composing with an appropriate
automorphism of $\T_+(X_u)$, we may assume that the induced
character space map $\gamma$ maps the origin to the origin (in the
realisation of $M(\A_u)$ in $\bC^{n+m}$ \cite{KP1}).  By the
generalized Schwarz inequality in \cite{PowSol}, it follows that
the biholomorphic map $\gamma$ is simply a rotation automorphism,
defined by a pair of unitaries $A \in M_m(\bC)$ and $B \in
M_n(\bC)$. Composing $\Gamma$ with the  inverse of the associated
gauge automorphism $\pi_{A,B}$ of $\T_+(X_u)$, we may assume that
$\gamma$ is the identity map. Since $\Gamma$ is isometric it
follows, as in \cite{PowSol}, that $\Gamma$ is the natural map,
which is a contradiction.

For $n=1$, note that $ \T_+(X_u)$  is the crossed product algebra
$\A_m \times_\alpha \bZ_+$, which is defined as the universal
operator algebra for covariant representations $(\rho, T)$, where
$\rho : \A_m \to \B(\H_\rho)$ is a contractive representation
determined by a row contraction $[S_1 \dots S_m]$ which
$u$-commutes with a contraction $T$. Here $\alpha$ is a gauge
automorphism of $\A_m$ determined by $u$. Moreover in this case
the tensor algebra \textit{is} isometrically isomorphic to $\A_u$.
One way to see this is to note that if ~~$[S_1' \dots S_m'], T'$~~
is an isometric dilation, determined by Popescu-Solel dilation,
then we may apply a second such dilation to ~~$[(S_1')^* \dots
(S_m')^*], T'^*$~~ to derive an isometric dilation of a covariant
representation $(\rho, T)$ of the form $(\sigma , U)$ with $U$
unitary. Such representations are completely contractive on
$\A_u$.

%%%%%%%%%%%%%%%%%%%%%%%%%%%%%%%%%%%%%%%%%%
\section{Dilation of defect free Representations}

We now show the distinctiveness of defect free contractive
representations in that they are completely contractive and have
\textit{unique} minimal $*$-dilations. Moreover, we show that
atomic contractive defect free representations of $\Fth$ have
unique minimal atomic representations. This is an essential tool
for the representation theory of 2-graph semigroups developed in
\cite{DPYatomic} because we frequently describe
$*$-representations by their restriction to a cyclic coinvariant
subspace.

%%%%%%%%%%%%%%%%%%%%%%%%%%%
\begin{thm}\label{defectfreediln}
Let $\sigma$ be a defect free, row contractive representation of
$\Fu$. Then $\sigma$ has a unique minimal $*$-dilation.

\end{thm}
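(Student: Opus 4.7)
The plan is to establish existence by restricting the row isometric dilation provided by Theorem~\ref{T:Solel} to a cyclic subspace, and to establish uniqueness by matching inner products of cyclic vectors. I would first apply Theorem~\ref{T:Solel} to produce a row isometric dilation $\pi$ of $\sigma$ on $\H \oplus \K$, arranged so that $\H$ is coinvariant for $\pi(\Fu)$ (which is the form the construction produces). Set $\M = \bigvee_{w \in \Fu} \pi(w)\H$; the goal is to show that $\M$ reduces $\pi$ and that $\pi|_\M$ is defect free, whence $\pi|_\M$ is the desired minimal $*$-dilation.

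The driving observation is that defect freeness of $\sigma$ forces the wandering subspace $\W_e = \ran(I - \sum_i \pi(e_i)\pi(e_i)^*)$ to be orthogonal to $\H$: since $P_\H P_{\W_e} P_\H = I_\H - \sum_i \sigma(e_i)\sigma(e_i)^* = 0$, positivity forces $P_{\W_e} P_\H = 0$, so $\W_e \perp \H$. To extend this to all of $\M$, take $w \in \Fu$ in normal form $w = e_u f_v$. If $|u| \ge 1$ then $\pi(w)\H \subseteq \ran \pi(e_{u_1}) \perp \W_e$ immediately. If $|u| = 0$, use the identity $\xi = \sum_i \pi(e_i)\sigma(e_i)^*\xi$ (defect freeness plus coinvariance of $\H$) to write
\[
\pi(f_v)\xi = \sum_i \pi(f_v e_i)\sigma(e_i)^*\xi,
\]
and then apply the $u$-commutation relations repeatedly to rewrite each $\pi(f_v e_i)$ as a combination of products $\pi(e_{u'_\alpha})\pi(f_{v'_\alpha})$ with $|u'_\alpha| \ge 1$; this places $\pi(f_v)\xi$ in $\sum_k \ran \pi(e_k) \perp \W_e$. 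Hence $\W_e \perp \M$, and a symmetric argument gives $\W_f \perp \M$.

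A parallel calculation shows $\pi(v)^*\pi(w)\H \subseteq \M$ for all $v,w \in \Fu$: after reducing to the cases $\pi(e_i)^*\pi(f_v)\xi$ and $\pi(f_j)^*\pi(e_u)\xi$, one expands $\xi$ as above, applies $u$-commutation to put things in normal form, and cancels via the row isometric identities $\pi(e_i)^*\pi(e_k) = \delta_{ik}I$ and $\pi(f_j)^*\pi(f_k) = \delta_{jk}I$. Thus $\M$ reduces $\pi$, and together with $\W_e, \W_f \perp \M$, this shows that $\pi|_\M$ is row isometric and defect free, hence a $*$-dilation of $\sigma$; it is minimal since $\M$ is generated by $\H$.

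For uniqueness, suppose $\pi_1, \pi_2$ are two minimal $*$-dilations. The defect free and $u$-commutation relations of $\O_{X_u}$ (as in the discussion preceding Lemma~\ref{L:gaugeunitary}) let one expand each product $\pi_k(w_1)^*\pi_k(w_2)$ as a finite sum $\sum_\alpha c_\alpha \pi_k(a_\alpha)\pi_k(b_\alpha)^*$ with scalars $c_\alpha$ and words $a_\alpha, b_\alpha$ depending only on $w_1$ and $w_2$. Coinvariance of $\H$ then yields
\[
\langle \pi_k(w_1)\xi_1, \pi_k(w_2)\xi_2 \rangle = \sum_\alpha c_\alpha \langle \sigma(a_\alpha)^*\xi_1, \sigma(b_\alpha)^*\xi_2 \rangle,
\]
a quantity depending only on $\sigma$. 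Hence $\pi_1(w)\xi \mapsto \pi_2(w)\xi$ is a well-defined isometry on a spanning set and extends to the required intertwining unitary fixing $\H$. The main technical obstacle is propagating $\W_e \perp \H$ to $\W_e \perp \M$: the interplay between defect freeness of $\sigma$ and the $u$-commutation relations is the heart of the argument.
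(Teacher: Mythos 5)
Your proposal is correct and follows essentially the same route as the paper: existence via Theorem~\ref{T:Solel} plus the observation that $\sum_i \pi(e_i)\pi(e_i)^*$ is a projection dominating $P_\H$ (so the minimal row isometric dilation is automatically defect free), and uniqueness by showing the inner products $\langle \pi(w_1)\xi_1,\pi(w_2)\xi_2\rangle$ are determined by $\sigma$ alone. The only differences are organizational --- the paper kills the defect space by showing it is coinvariant and orthogonal to $\H$ rather than propagating orthogonality word by word, and its uniqueness computation equalizes degrees using $\sum_{d(v)=(a,b)}\pi(v)\pi(v)^*=I$ instead of expanding $w_1^*w_2$ in the form $\sum_\alpha c_\alpha\, a_\alpha b_\alpha^*$ --- but these are equivalent mechanisms.
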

%%%%%%%%%%%%%%%%%%%%%%%%%%%

The proof follows from Theorem \ref{T:Solel} and the next two
lemmas and the fact that a defect free row isometric representation
is a $*$-dilation.

%%%%%%%%%%%%%%%%%%%%%%%%%%%
\begin{lem}\label{L:diln stays df}
Let $\sigma$ be a defect free, row contractive representation.
Then any minimal row isometric dilation is defect free.
\end{lem}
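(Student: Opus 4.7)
The plan is as follows. By the symmetry between the two sets of generators, it suffices to show that $P_E := \sum_{i=1}^m \pi(e_i)\pi(e_i)^* = I_\K$; the argument for $\sum_{j=1}^n \pi(f_j)\pi(f_j)^*$ will be identical. A preliminary observation is that in a minimal dilation $\H$ is automatically coinvariant: semi-invariance provides a pair $\K_0 \subseteq \K_1$ of $\pi$-invariant subspaces with $\H = \K_1 \ominus \K_0$, and then $\K_1 \supseteq \bigvee_{w\in\Fu}\pi(w)\H = \K$ forces $\K_1 = \K$, so $\K \ominus \H = \K_0$ is invariant. In particular, $\pi(w)^* h = \sigma(w)^* h$ for every $h \in \H$ and $w \in \Fu$.

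The first real step is to show $\H \subseteq \ran P_E$. For $h \in \H$, coinvariance together with the defect free hypothesis on $\sigma$ give
\[
 \langle P_E h, h\rangle = \sum_{i=1}^m \|\pi(e_i)^* h\|^2
 = \sum_{i=1}^m \|\sigma(e_i)^* h\|^2 = \|h\|^2,
\]
and since $P_E$ is a projection this forces $P_E h = h$.

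The second step is that $\ran P_E$ is $\pi$-invariant. Invariance under each $\pi(e_k)$ is automatic since $\pi(e_k)\K \subseteq \sum_i \pi(e_i)\K = \ran P_E$. For $\pi(f_j)$, inverting the unitary commutation relation $\pi(e_i)\pi(f_j) = \sum_{i',j'} u_{(i,j),(i',j')}\pi(f_{j'})\pi(e_{i'})$ yields
\[
 \pi(f_j)\pi(e_i) = \sum_{k,l} \bar u_{(k,l),(i,j)}\,\pi(e_k)\pi(f_l),
\]
so $\pi(f_j)$ sends each $\pi(e_i)\K$ back into $\sum_k \pi(e_k)\K = \ran P_E$. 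Minimality then finishes the argument: $\ran P_E$ is a $\pi$-invariant subspace containing $\H$, hence contains $\bigvee_{w\in\Fu}\pi(w)\H = \K$, and $P_E = I_\K$. The only mild technical point is the commutation identity in the second step, which uses nothing beyond the unitarity of $u$; everything else is bookkeeping.
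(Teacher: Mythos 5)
Your proof is correct and is essentially the paper's argument in complementary form: the paper shows that $\M=\ran(I-P_E)$ is coinvariant and orthogonal to $\H$ (via the same commutation-relation computation and the observation that the projection $P_E$ has $(1,1)$-corner $I_\H$), hence orthogonal to $\bigvee_w\pi(w)\H=\K$, while you show the equivalent statement that $\ran P_E$ is invariant and contains $\H$. Your explicit justification that $\H$ is coinvariant in a minimal dilation is a small point the paper leaves implicit, but otherwise the two proofs coincide.
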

%%%%%%%%%%%%%%%%%%%%%%%%%%%

\begin{proof}
Let  $\pi$ be a minimal row isometric dilation acting on $\K$. Set
$\M = (I - \sum_i \pi(e_i)\pi(e_i)^*) \K$. We first show that $\M$
is coinvariant. Indeed, if $x \in \M$ and $y \in \K$, then plainly
\begin{align*}
 \ip{ \pi(e_i)^* x, \pi(e_k) y} &= \ip{x, \pi(e_i)\pi(e_k) y}= 0
\end{align*}
for each $i$ and $k$, while, using the commutation relations,
\begin{align*}
 \ip{ \pi(f_l)^* x, \pi(e_k) y} &= \ip{x, \pi(f_le_k) y}\\
& = \sum_{i=1}^m\sum_{j=1}^n {u}_{(i,j),(k,l)}\ip{x,
\pi(e_i)\pi(f_j)y}=0.
\end{align*}
So $ \sigma(w)^* x$ belongs to
$\M = \big( \sum_i \pi(e_i) \K \big)^\perp$ for any word $w$.

If we write each $\pi(e_i)$ as a matrix with respect to $\K=\H
\oplus\H^\perp$, we have $\pi(e_i) =
\begin{bmatrix}\sigma(e_i)&0\\ \ast&\ast\end{bmatrix}$. Therefore
\[
 \sum_i \pi(e_i) \pi(e_i)^* =
 \begin{bmatrix}\sum_i \sigma(e_i)\sigma(e_i)^* &\ast\\ \ast&\ast\end{bmatrix}
 = \begin{bmatrix}I_\H &\ast\\ \ast&\ast\end{bmatrix} .
\]
This is a projection, and thus
\[
 \sum_i \pi(e_i) \pi(e_i)^* =
 \begin{bmatrix}I_\H &0 \\ 0&\ast\end{bmatrix}
 \ge P_\H.
\]
Thus $\M$ is orthogonal to $\H$. It now follows that for any
$x\in\M$, $h \in \H$ and $w \in \Fu$,
\[ \ip{\pi(w)h,x} = \ip{h,\pi(w)^*x} = 0\]
because $\pi(w)^*x \in \M$. But the vectors of the form
$\pi(w)h$ span $\K$, and so $\M=\{0\}$.
\end{proof}

An immediate consequence of this lemma and Theorem~\ref{DilnAth} is:

%%%%%%%%%%%%%%%%%%%%%%%%%%%
\begin{cor}
Every defect free, row contractive representation $\pi$ of $\Fu$
extends to a completely contractive representation of $\A_u$.
\end{cor}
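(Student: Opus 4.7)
The plan is to chain together the three ingredients assembled in this section: Solel's row isometric dilation theorem (Theorem~\ref{T:Solel}), the preceding Lemma~\ref{L:diln stays df}, and the characterization of completely contractive representations via $*$-dilations (Theorem~\ref{DilnAth}).

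First, I would start with an arbitrary defect free, row contractive representation $\sigma$ of $\Fu$ on a Hilbert space $\H$. Applying Theorem~\ref{T:Solel} produces a row isometric dilation $\pi$ of $\sigma$ on some larger space $\H \oplus \K$. Next, I would pass to the smallest reducing subspace of $\pi$ containing $\H$ (equivalently, compress to the minimal row isometric dilation); since $\H$ is a semi-invariant subspace in the minimal dilation and $\sigma$ is still recovered as its compression, we may assume without loss of generality that $\pi$ is a minimal row isometric dilation of $\sigma$.

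At this point Lemma~\ref{L:diln stays df} applies directly: because $\sigma$ was assumed defect free, the minimal row isometric dilation $\pi$ is also defect free. By definition this means $\pi$ is a $*$-representation of $\Fu$ (row isometric plus defect free). Consequently $\sigma$ dilates to a $*$-representation of $\Fu$.

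Finally, Theorem~\ref{DilnAth} asserts that a representation of $\Fu$ satisfying the unitary commutation relations dilates to a $*$-representation if and only if it extends to a completely contractive representation of $\A_u$. So $\sigma$ extends to a completely contractive representation of $\A_u$, which is the desired conclusion. There is no real obstacle here; everything is already in place, and the only point to be slightly careful about is that passing to the minimal dilation does not disturb the compression back to $\sigma$, so that Lemma~\ref{L:diln stays df} is legitimately applicable.
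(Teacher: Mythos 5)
Your proposal is correct and follows exactly the route the paper intends: the corollary is stated immediately after Lemma~\ref{L:diln stays df} with the remark that it is an immediate consequence of that lemma (together with Theorem~\ref{T:Solel} to produce a minimal row isometric dilation) and Theorem~\ref{DilnAth}. Your extra care about passing to the minimal dilation without disturbing the compression is a reasonable point but unproblematic, since the original space remains semi-invariant in any reducing subspace containing it.
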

%%%%%%%%%%%%%%%%%%%%%%%%%%%

%%%%%%%%%%%%%%%%%%%%%%%%%%%
\begin{lem}
The minimal row isometric dilation of a defect free, row
contractive representation of $\Fu$ is unique up to a unitary
equivalence that fixes the original space.
\end{lem}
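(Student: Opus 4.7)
The plan is to fix two minimal row isometric dilations $\pi_1$ and $\pi_2$ of $\sigma$, acting on $\K_1$ and $\K_2$ respectively, and to construct a unitary $U \colon \K_1 \to \K_2$ that restricts to the identity on $\H$ and intertwines the $\pi_k$. By Lemma~\ref{L:diln stays df} both $\pi_k$ are automatically defect free, hence $*$-dilations. My first task is to strengthen the semi-invariance of $\H$ to \emph{coinvariance}. Writing $\pi_k(e_i) = \left[\begin{smallmatrix} \sigma(e_i) & B_i^{(k)} \\ C_i^{(k)} & D_i^{(k)} \end{smallmatrix}\right]$ with respect to $\K_k = \H \oplus \H^\perp$ and examining the $(\H,\H)$-block of $\sum_i \pi_k(e_i)\pi_k(e_i)^* = I$, the defect free hypothesis on $\sigma$ forces $\sum_i B_i^{(k)}(B_i^{(k)})^* = 0$ and hence each $B_i^{(k)} = 0$. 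The same argument applied to the $f_j$'s shows that $\H^\perp$ is invariant for every generator, so $\pi_k(w)^*\H \subseteq \H$ and $P_\H \pi_k(w) P_\H = \sigma(w)$ for every $w \in \Fu$.

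I then define $U$ on the dense subspace of $\K_1$ consisting of finite sums $\sum \pi_1(w_i) h_i$ by setting $U \pi_1(w) h = \pi_2(w) h$ and extending linearly. The crux is to show that $U$ is well-defined and isometric, i.e.\ that
\[
  \ip{\pi_1(w_1) h_1,\, \pi_1(w_2) h_2} \;=\; \ip{\pi_2(w_1) h_1,\, \pi_2(w_2) h_2}
\]
for all $w_1, w_2 \in \Fu$ and $h_1, h_2 \in \H$. The key observation, already used in the discussion of gauge automorphisms, is that in any $*$-representation of $\Fu$ the relations
\[
  e_i^* e_j = \delta_{ij}, \quad f_i^* f_j = \delta_{ij}, \quad e_i^* f_j = \sum_{k,\,j'} \bar{u}_{(i,j'),(k,j)}\, f_{j'} e_k^*
\]
allow one to rewrite $\pi_k(w_1)^* \pi_k(w_2)$ as a finite linear combination $\sum_{x,y} c_{x,y}\, \pi_k(x) \pi_k(y)^*$ whose coefficients $c_{x,y}$ are universal scalars depending only on $u$, $w_1$ and $w_2$. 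Combined with coinvariance, each term satisfies
\[
  \ip{h_1,\, \pi_k(x)\pi_k(y)^* h_2} \;=\; \ip{\sigma(x)^* h_1,\, \sigma(y)^* h_2},
\]
which is independent of $k$. Summing against the $c_{x,y}$ yields the required equality.

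Thus $U$ extends to an isometry from $\K_1$ to $\K_2$. Its image contains $\pi_2(w)\H$ for every $w \in \Fu$, and the coinvariance established above shows that $\bigvee_w \pi_2(w)\H$ is already reducing for $\pi_2$, so minimality of $\pi_2$ forces this subspace to equal $\K_2$ and $U$ to be unitary. Setting $w=\mt$ shows that $U$ fixes $\H$, and the intertwining $U\pi_1(v) = \pi_2(v)U$ follows on the dense subspace from $U\pi_1(v)\pi_1(w)h = \pi_2(vw)h = \pi_2(v)U\pi_1(w)h$.

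The main obstacle is the algebraic normalisation of $\pi_k(w_1)^*\pi_k(w_2)$ above: one must check that iterated application of the $u$-commutation relations produces coefficients $c_{x,y}$ that are intrinsic to $\O_{X_u}$ rather than dependent on the particular representation. Once this is in hand --- and it is precisely the bookkeeping carried out in identifying the gauge-fixed subalgebra of $\O_{X_u}$ as a UHF algebra --- the remainder of the argument is routine.
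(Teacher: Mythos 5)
Your proof is correct, and its skeleton is the same as the paper's: both arguments define $U$ on the dense subspace $\operatorname{span}\{\pi(w)h : w\in\Fu,\ h\in\H\}$ by $U\pi_1(w)h=\pi_2(w)h$ and reduce everything to showing that $\ip{\pi_k(w_1)h_1,\pi_k(w_2)h_2}$ is computable from $\sigma$ alone. Where you differ is in the engine for that computation. The paper works at the level of vectors: it uses defect freeness of the dilation (Lemma~\ref{L:diln stays df}) to insert $\sum_{d(v)=(a,b)}\pi(v)\pi(v)^*=I$ and rewrite each finite sum over words of a single common degree $(m_0,n_0)$, and then exploits the pairwise orthogonality of the ranges of $\pi(w)$ for distinct $w$ of that degree to get $\sum_w\ip{h_w,h'_w}$. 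You instead work at the level of operators, normalising $w_1^*w_2$ to a finite combination $\sum c_{x,y}\,xy^*$ with coefficients intrinsic to the relations; the cleanest justification of universality is that both $\pi_1$ and $\pi_2$, being defect free and row isometric, factor through $\O_{X_u}$, where the normal form is established in the gauge-automorphism subsection. Note that your relation $e_i^*f_j=\sum\bar u_{(i,j'),(k,j)}f_{j'}e_k^*$ already uses $\sum_k e_ke_k^*=I$, so Lemma~\ref{L:diln stays df} must be invoked first, as you do. Two features of your write-up are genuine improvements in explicitness: you derive the coinvariance of $\H$ (vanishing of the $(\H,\H^\perp)$ corner of each $\pi_k(e_i)$) from the projection identity, a fact the paper asserts without comment in the matrix decomposition of Lemma~\ref{L:diln stays df}; and you justify surjectivity of $U$ by checking that $\bigvee_w\pi_2(w)\H$ is reducing before invoking the paper's (reducing-subspace) definition of minimality, a step the paper also elides. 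The trade-off is that the paper's degree-normalisation is self-contained and avoids any appeal to the $\O_{X_u}$ bookkeeping, while yours is shorter once that bookkeeping is granted.
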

%%%%%%%%%%%%%%%%%%%%%%%%%%%

\begin{proof}
 Let $\pi$ be a minimal row isometric
dilation of $\sigma$ on the Hilbert space $\K$. Let $\W$ be the
set of words $w=e_uf_v$ in $\Fu$. By minimality and the
commutation relations, a dense set in $\K$ is given by the vectors
of the form $\sum_k \pi(w_k) h_k$ where this is a finite sum, each
$h_k \in\H$ and $w_k \in \W$. We first show that given any two
such vectors, $\sum_k \pi(w_k) h_k$ and $\sum_l \pi(w'_l) h'_l$,
we may suppose that each $w_k$ and $w'_k$ has the same degree.

To this end, let $d(w_k) = (m_k,n_k)$ and $d(w'_l) = (m'_l,n'_l)$, and set
\[ m_0 = \max\{m_k, m'_l\} \qand n_0=\max\{n_k, n'_l\} .\]
For each $w_k$, let $a_k = m_0-m_k$ and $b_k = n_0-n_k$.
Then because $\pi$ is defect free by Lemma~\ref{L:diln stays df},
\begin{align*}
 \pi(w_k) h_k &=
 \pi(w_k) \Big( \sum_{d(v)=(a_k,b_k)} \pi(v)\pi(v)^* \Big) h_k\\
 &= \sum_{d(v)=(a_k,b_k)}  \pi(w_kv) ( \sigma(v)^* h_k) .
\end{align*}
The second line follows because $\H$ is coinvariant for
$\pi(\Fu)$, and so $\pi(v)^* h_k = \sigma(v)^* h_k$ belongs to
$\H$. Using the commutation relations we may write the original
sum with new terms, each of which has degree $(m_0,n_0)$. Combine
terms if necessary so that the words $w_k$ are distinct. Then we
obtain a sum of the form $\sum_{d(w)=(m_0,n_0)} \pi(w) h_w$. We
similarly rewrite
\[ \sum_l \pi(w'_l) h'_l = \sum_{d(w)=(m_0,n_0)} \pi(w) h'_w .\]

Now the isometries $\pi(w)$ for distinct words of degree
$(m_0,n_0)$ have pairwise orthogonal ranges. Therefore we compute
\begin{align*}
 \big\langle \!\!\!\!\! \sum_{d(w)=(m_0,n_0)} \!\!\!\!\! \pi(w) h_w ,
 \sum_{d(w)=(m_0,n_0)}\!\!\!\!\! \pi(w) h'_w\big\rangle &=
 \sum_{d(w)=(m_0,n_0)}\!\!\!\!\! \ip{ \pi(w) h_w, \pi(w) h'_w} \\&=
 \sum_{d(w)=(m_0,n_0)}\!\!\!\!\! \ip{ h_w,h'_w} .
\end{align*}

Now suppose that $\pi'$ is another minimal row isometric dilation
of $\sigma$ on a Hilbert space $\K'$. The same computation is
valid for it. Thus we may define a map from the dense subspace
$\spn\{\pi(\Fu)\H\}$ of $\K$ to  the dense subspace
$\spn\{\pi'(\Fu)\H\}$ of $\K'$ by
\[
 U \sum_{d(w)=(m_0,n_0)} \pi(w) h_w =
 \sum_{d(w)=(m_0,n_0)} \pi'(w) h_w .
\]
The calculation of the previous paragraph shows that $U$ preserves
inner products, and thus is well defined and isometric. Hence it
extends by continuity to a unitary operator of $\K$ onto $\K'$.
Moreover, each vector in $h$ has the form $h = \pi(\mt)h$; and
thus $Uh=h$.  That is, $U$ fixes the subspace $\H$. Finally, it is
evident from its definition that $\pi'(w) = U \pi(w)U^*$ for all
$w\in\Fu$. So $\pi'$ is equivalent to $\pi$.
\end{proof}

For our applications in \cite{DPYatomic}, we need the
following refinement for atomic representations.

%%%%%%%%%%%%%%%%%%%%%%%%%%%
\begin{thm}
If $\sigma$ is an atomic defect free partially isometric
representation of $\Fth$, then the unique minimal 
$*$-dilation $\pi$ is also atomic.
\end{thm}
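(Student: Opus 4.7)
The plan is to exhibit an explicit atomic orthonormal basis of the dilation space $\K$, built from the images of the atomic basis of $\H$ under $\pi$. Write $\{\xi_k\}$ for the atomic basis of $\H$; by the minimality of $\pi$ and Lemma~\ref{L:diln stays df}, $\H$ is coinvariant for $\pi$ and $\K$ is the closed span of $\{\pi(w)\xi_k : w \in \Fth,\, k \geq 1\}$. The heart of the argument is a dichotomy: any two such spanning vectors are either proportional by a unimodular scalar or mutually orthogonal.

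To prove the dichotomy, I would pick a degree $(M, N)$ dominating both $d(w_1)$ and $d(w_2)$ coordinatewise, and apply the defect-freeness of $\pi$ (inherited from Lemma~\ref{L:diln stays df}) together with coinvariance of $\H$ to rewrite
\[
 \pi(w_i)\xi_{k_i} = \sum_{d(v) = (M,N) - d(w_i)} \pi(w_iv)\, \sigma(v)^*\xi_{k_i}.
\]
Since $\sigma$ is atomic, partially isometric, and defect-free, iterating the defect-free identities gives $\sum_{d(v) = (a,b)} \sigma(v)\sigma(v)^* = I$, which together with atomicity forces, for each basis vector $\xi_k$ and each degree $(a, b)$, a unique word $v$ of that degree with $\sigma(v)^*\xi_k$ nonzero, namely $\sigma(v)^*\xi_k = \bar\alpha\, \xi_{k'}$ for some $\alpha \in \bT$ and some basis vector $\xi_{k'}$. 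So each $\pi(w_i)\xi_{k_i}$ collapses to a single term $\bar\alpha_i\, \pi(u_i)\xi_{k_i'}$ with $d(u_i) = (M, N)$. If $u_1 \neq u_2$, the projections $\pi(u)\pi(u)^*$ for words of degree $(M,N)$ are pairwise orthogonal by defect-freeness of $\pi$, so the two vectors are orthogonal. If $u_1 = u_2$, the fact that $\pi(u_1)$ is an isometry yields either orthogonality (when $\xi_{k_1'} \neq \xi_{k_2'}$) or unimodular proportionality (when $\xi_{k_1'} = \xi_{k_2'}$).

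Given the dichotomy, selecting one representative per unimodular-scalar equivalence class produces an orthonormal set $\{\eta_\alpha\}$, which by minimality is a basis of $\K$. For any generator $g$ and representative $\eta_\alpha = c\,\pi(w)\xi_k$, the vector $\pi(g)\eta_\alpha = c\,\pi(gw)\xi_k$ lies in the equivalence class of some $\eta_\beta$, and the implementing scalar is unimodular since $\pi(g)$ is an isometry. Thus $\{\eta_\alpha\}$ is permuted up to unimodular scalars by every generator, so $\pi$ is atomic.

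The main obstacle is the dichotomy step, where $\pi(w_1)\xi_{k_1}$ and $\pi(w_2)\xi_{k_2}$ of a priori incomparable degrees must be compared. Rewriting them at a common larger degree follows the mechanism of the uniqueness proof above, but the atomic hypothesis on $\sigma$ is what collapses each such rewriting to a single term rather than a sum, and this is precisely what sharpens the comparison into the required alternative between orthogonality and unimodular proportionality.
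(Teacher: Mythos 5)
Your proposal is correct and follows essentially the same route as the paper's proof: both span $\K$ by the vectors $\pi(w)\xi_k$, raise any two such vectors to a common degree using defect freeness together with atomicity of $\sigma$ (your unique word $v$ of a given degree with $\sigma(v)^*\xi_k\neq 0$ is precisely the paper's unique pair $y_k,\zeta_k$ with $\sigma(y_k)\dot\zeta_k=\dot\xi_k$), and then conclude via orthogonality of ranges of distinct words of equal degree and the isometry of $\pi(u)$. The only cosmetic difference is that you phrase the reduction through the dilation's defect-free resolution of the identity and collapse the resulting sum, rather than invoking the unique preimage under $\sigma$ directly.
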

%%%%%%%%%%%%%%%%%%%%%%%%%%%

This follows from the theorem above and the next lemma.

%%%%%%%%%%%%%%%%%%%%%%%%%%%
\begin{lem}
Let $\sigma$ be an atomic, defect free, partially isometric
representation of $\Fth$. Then any minimal $*$-dilation
of $\sigma$ is atomic.
\end{lem}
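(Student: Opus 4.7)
The plan is to build an orthonormal basis for the dilation space $\K$ directly from the atomic basis of $\H$ together with the elements of $\Fth$. Let $\{\xi_k : k \geq 1\}$ be an atomic basis for $\sigma$ on $\H$, and let $\pi$ on $\K$ be a minimal $*$-dilation (which exists and is unique by the preceding results). Consider the set
\[
\B \,=\, \{\,\pi(w)\xi_k : w \in \Fth,\ k \geq 1\,\} \setminus \{0\}.
\]
By minimality, $\spn\B$ is dense in $\K$. So it suffices to show that any two elements of $\B$ are either orthogonal or equal up to a unimodular scalar, and then pick one representative from each equivalence class.

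The first step is an atomic refinement of the defect-free reduction used in the uniqueness lemma. Because $\sigma$ is defect free and partially isometric, $\sum_{d(v)=(a,b)} \sigma(v)\sigma(v)^* = I$ for every $(a,b) \in \bZ_+^2$, so each basis vector $\xi_k$ lies in the range of exactly one such $\sigma(v)$, call it $v_k(a,b)$, and atomicity gives
\[
\sigma(v_k(a,b))^*\,\xi_k \,=\, \alpha\, \xi_{k'} \qfor \text{some } \alpha \in \bT,\ k' \geq 1,
\]
while $\sigma(v)^*\xi_k = 0$ for all other $v$ of degree $(a,b)$. Combining this with the computation in the uniqueness proof yields
\[
\pi(w)\xi_k \,=\, \sum_{d(v)=(a,b)} \pi(wv)\,\sigma(v)^*\xi_k \,=\, \alpha\, \pi\bigl(w\, v_k(a,b)\bigr) \xi_{k'},
\]
so any element of $\B$ can be rewritten, at the cost of a unimodular scalar, as $\pi(w')\xi_{k'}$ with $w'$ having any prescribed degree $\geq d(w)$ coordinatewise.

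The second step handles orthogonality. Given two elements $\pi(w_1)\xi_{k_1}$ and $\pi(w_2)\xi_{k_2}$ of $\B$, take $(a,b)$ large enough that both $w_1$ and $w_2$ admit extensions to common degree $(m_0,n_0)$, and rewrite
\[
\pi(w_i)\xi_{k_i} \,=\, \alpha_i\, \pi(\td{w}_i)\xi_{\td k_i}, \qquad d(\td{w}_1)=d(\td{w}_2)=(m_0,n_0).
\]
Because $\pi$ is row isometric and defect free, the isometries $\{\pi(w) : d(w)=(m_0,n_0)\}$ have pairwise orthogonal ranges. Hence if $\td w_1 \neq \td w_2$ the two vectors are orthogonal; if $\td w_1 = \td w_2$ then orthogonality reduces to $\langle \xi_{\td k_1}, \xi_{\td k_2}\rangle$, which is either $0$ (giving orthogonality) or nonzero (forcing $\td k_1 = \td k_2$, hence equality up to a unimodular scalar).

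The final step is to select a representative $\pi(w)\xi_k$ from each equivalence class under ``equal up to unimodular scalar''; the resulting set is an orthonormal basis of $\K$. This basis is permuted (up to unimodular scalars) by every $\pi(w')$, since $\pi(w')\bigl(\pi(w)\xi_k\bigr) = \pi(w'w)\xi_k$ lies in $\B\cup\{0\}$ and hence is a unimodular scalar multiple of a chosen representative. Thus $\pi$ is atomic. The main obstacle is the bookkeeping in the second step: one must verify that the reduction to common degree and the pairwise-orthogonal-ranges property combine to give the clean dichotomy above, rather than some partial overlap, and this relies crucially on the atomicity of $\sigma$ propagating through the defect-free identity.
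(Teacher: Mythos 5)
Your proof is correct and takes essentially the same route as the paper's: both reduce to the spanning family $\{\pi(w)\xi_k\}$ via minimality, use the defect-free identity together with atomicity of $\sigma$ to rewrite the two vectors with words of a common degree at the cost of a unimodular scalar, and then invoke the pairwise orthogonality of the ranges of $\pi(w)$ for distinct words of equal degree. The only cosmetic difference is that the paper phrases the dichotomy in terms of the rays $\dot\xi_k = \bC^*\xi_k$ rather than your equivalence classes modulo unimodular scalars.
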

%%%%%%%%%%%%%%%%%%%%%%%%%%%

\begin{proof}
Let  $\pi$  be a minimal row isometric dilation of $\sigma$ acting
on $\K$. Consider the standard basis $\{ \xi_k : k \ge1 \}$ for
$\H$ with respect to which $\sigma$ is atomic.
Let $\dot\xi_k$ denote $\bC^*\xi =  \{\alpha\xi_k:
\alpha \in \bC\backslash \{0\}\}.$ We claim that the set $\{
\pi(x) \dot\xi_k : k \ge 1, x \in \Fth\}$ forms an orthonormal
family of 1-dimensional subsets spanning $\K$, with repetitions.
Indeed, $\H$ is coinvariant and cyclic; so these sets span $\K$.
It suffices to show that any two such sets, say $\pi(x_1)
\dot\xi_1$ and $\pi(x_2)\dot\xi_2$, either coincide or are
orthogonal.

Let $d(x_k) = (m_k,n_k)$ for $k=1,2$; and set
\[ (m_0,n_0) =  (m_1,n_1) \vee  (m_2,n_2) = \big(\max\{m_1,m_2\}, \max\{n_1,n_2\} \big) .\]
Since $\sigma$ is defect free, there are unique basis vectors
$\zeta_k$ and words $y_k$ with $d(y_k) = (m_0-m_k,n_0-n_k)$ so
that $\sigma(y_k)\dot\zeta_k = \dot\xi_k$. Thus using
$\dot\zeta_k$ and the word $x_ky_k$, we may suppose that the two
words have the same degree. For convenience of notation, we
suppose that this has already been done.

Write $x_k = e_{u_k}f_{v_k}$.
As noted in the proof of Theorem~\ref{T:Solel}, two distinct
words of the same degree have pairwise orthogonal ranges.
Thus if $x_1 \ne x_2$, then $\pi(x_1)\dot\xi_1$
and $\pi(x_2)\dot\xi_2$ are orthogonal.
%If $e_{u_1} \ne e_{u_2}$, then
%because $|u_1|=|u_2|$, the isometries $\pi(e_{u_k})$ have
%orthogonal ranges. Therefore $\pi(x_1) \dot\xi_1$ and
%$\pi(x_2)\dot\xi_2$ are orthogonal. Similarly, if $f_{v_1} \ne
%f_{v_2}$, then $\pi(f_{v_1}) \dot\xi_1$ and
%$\pi(f_{v_2})\dot\xi_2$ are orthogonal. Applying $\pi(e_{u_k})$ to
%each yields orthogonal sets whether the words $e_{u_k}$ are
%distinct or equal.
On the other hand, if $x_1=x_2$, then if
$\dot\xi_1=\dot\xi_2$, the images are equal; while if
$\dot\xi_1$ and $\dot\xi_2$ are
orthogonal, they remain orthogonal under the action of the
isometry $\pi(x_1)$.
\end{proof}

%%%%%%%%%%%%%%%%%%%%%%%%%%%

%%%%%%%%%%%%%%%%%%%%%%%%%%%

%%%%%%%%%%%%%%%%%%%%%%%%%%

\end{document}